\newtheorem{definition}{Definition}[section]
\newtheorem{theorem}[definition]{Theorem}
\newtheorem{proposition}[definition]{Proposition}
\newtheorem{lemma}[definition]{Lemma}
\newtheorem{cor}[definition]{Corollary}
\theoremstyle{definition}
\newtheorem{example}[definition]{Example}
\newtheorem{remark}[definition]{Remark}
\newcommand{\Aut}{\mathrm{Aut}}
\newcommand{\K}{\mathcal{K}}
\newcommand{\eS}{\mathcal{S}}
\newcommand{\eL}{\mathcal{L}}
\newcommand{\N}{\mathbb{N}}
\newcommand{\infgal}{\mathrm{Inf}\text{-}\mathrm{gal}\, }
\title{On the definition of the Galois group of linear differential equations}
\author{Katsunori Saito\\
{\normalsize Graduate School of Mathematics, Nagoya University}\\
{\footnotesize e-mail: m07026e@math.nagoya-u.ac.jp}
}
\date{}
\begin{document}
\maketitle
\begin{abstract}
Let us consider a linear differential equation $Y'=AY$ over a differential field $K$, where $A \in \mathrm{M_{n}}(K)$. 
For a differential field extension $L/K$ generated by a fundamental system $F$ of the equation, 
we show that Galois group according to the general Galois theory of Umemura coincides with the Picard-Vessiot Galois group. 
This conclusion generalized the comparision theorem of Umemura and Casale. 
%Moreover we largely simplified the proof. 
%We can apply also to $G$-primitive extensions. 
\end{abstract}
%\tableofcontents
\section{Introduction}
There are two ways of understanding the Galois theory of algebraic equations. 
In the first way, when we are given an algebraic equation, according to Galois's original paper, the Galois group is described as a permutation group of solutions of the equation. 

In the second way, Dedekind introduced a nice idea, an algebraic equation over field $k$ determines the normal extension $F/k$ of fields. The Galois group is attached to the normal extension. 
Namely the Galois group is the automorphism group $\Aut(F/k)$ of the field extension. He replaced algebraic equations by field extension. 

%%%%%%%%%%%%%%%%%%%%%%%%%%%%%%%%%%%%%%%%%%%%%%%%%
The similar situation arises in Picard-Vessiot theory. We consider a linear differential equation
\begin{equation}
Y' = AY \label{ldeq}
\end{equation}
over differential field $K$ with algebraically closed field of constants $C$. 
In the first approach, as Dedekind did, we define the normal extension $L^{PV}/K$ of the system of linear differential equation \eqref{ldeq} that is uniquely determined, called the Picard-Vessiot extension of \eqref{ldeq}. Then the field of constants of $L^{PV}$ is equal to $C$ and the Galois group $\mathrm{Gal}(L^{PV}/K)$ of system \eqref{ldeq} is the differential automorphism group $\Aut^{\partial}(L^{PV}/K)$ that has an algebraic group structure over $C$. 

In the second approach, linear differential equation \eqref{ldeq} generates a neutral tannakian category of differential modules. 
The Galois group of system \eqref{ldeq} is the affine group scheme of the neutral tannakian category. 
%%%%%%%%%%%%%%%%%%%%%%

In this note, we consider a differential field $L$ over $K$ generated by a system of solutions of linear differential equation \eqref{ldeq}. The field $L$ satisfies the following conditions. (1) There exists a fundamental matrix $F \in \mathrm{GL}_{n}(L)$ so that $F' = AF$. (2) $L$ is generated over $K$ by the entries $f_{ij}$ of $F$. % and the inverse of $\det F$. 
We consider Galois group $\infgal(L/K)$ of the differential field extension $L/K $ in general Galois theory of Umemura, % instead of Picard-Vessiot extension. 
and compare to Galois group $\mathrm{Gal}(L^{PV}/K)$ of the Picard-Vessiot extension $L^{PV}/K$ for equation \eqref{ldeq}. 
%In this note, we prove that these Galois group coincide. 
%??? independent of choice of a fundamental system $F$. 
Then we get the following main theorem (Theorem \ref{1210b}). 
\begin{theorem}
We assume that the field $K$ is algebraically closed. 
Let $S$ be a differential domain generated over $K$ by a system of solutions of a linear differential equation $Y' = AY$ and $R= K[Z^{PV}, \, (\det Z^{PV})^{-1}]$ be a Picard-Vessiot ring for the equation $Y'=AY$. %=K[Z^{PV}, \, (\det Z^{PV})^{-1}]/K
The field $L=Q(S)$ is the field of fractions of $S$ and the field $L^{PV} = Q(R)$ is the field of fractions of $R$. 
Then we have an isomorphism
\[
\mathrm{Lie}\;(\infgal (L/K)) \simeq \mathrm{Lie}\;(\mathrm{Gal}(L^{PV}/K)) \otimes_{C} L^{\natural}. 
\]
\end{theorem}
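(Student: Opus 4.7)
The guiding idea is that, after base change to the universal differential field $L^{\natural}$ appearing in Umemura's construction, the fundamental matrix $F \in \mathrm{GL}_{n}(L)$ generating $S$ and the Picard--Vessiot matrix $Z^{PV}$ generating $R$ differ only by a \emph{constant} invertible matrix. Hence the two differential field extensions $L/K$ and $L^{PV}/K$, while a priori distinct, carry essentially the same infinitesimal symmetry data once coefficients in $L^{\natural}$ are allowed; the tensor product on the right-hand side of the claimed isomorphism is exactly the price to be paid for this base change.

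I would organize the proof in three steps. In the first step, I would unwind the definition of $\mathrm{Lie}(\infgal(L/K))$ following Umemura's general Galois theory. This Lie algebra is realized as a space of continuous $L^{\natural}$-linear derivations $D$ of the appropriate formal completion of $L \otimes_{K} L^{\natural}$ that commute with the prolonged derivation $\partial$ and vanish on $K$. Because $S$ is generated over $K$ by the entries $f_{ij}$ of $F$, any such $D$ is determined by its action on $F$; moreover, the commutation $[D,\partial]=0$ together with $\partial F = AF$ and $D(A)=0$ forces $D(F) = F M_{D}$ for a uniquely determined matrix $M_{D}$ whose entries are $\partial$-constants of $L^{\natural}$, so $M_{D}$ lies in $\mathrm{M}_{n}(C_{L^{\natural}})$, where $C_{L^{\natural}}$ denotes the field of constants of $L^{\natural}$.

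In the second step, I would identify precisely which matrices $M_{D}$ are admissible. By Kolchin's theorem, $\mathrm{Lie}(\mathrm{Gal}(L^{PV}/K))$ is the Lie algebra of constant matrices $M \in \mathrm{M}_{n}(C)$ such that $Z^{PV} \mapsto Z^{PV} M$ extends to a differential $K$-derivation of $R$. Since both $F$ and $Z^{PV}$ are fundamental matrices for $Y'=AY$, and $K$ is algebraically closed, one obtains an element $P \in \mathrm{GL}_{n}(C_{L^{\natural}})$ with $F = Z^{PV} P$ inside a suitable common overring. Transport of structure by $P$ then yields a bijection between the admissible $M_{D}$ and elements of $\mathrm{Lie}(\mathrm{Gal}(L^{PV}/K)) \otimes_{C} L^{\natural}$, and a short computation shows that this bijection is compatible with the Lie bracket.

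The main obstacle, in my view, lies in the verification that every admissible $M_{D}$ actually lifts to an honest $L^{\natural}$-linear derivation of the whole formal completion of $L \otimes_{K} L^{\natural}$, and conversely that no extra derivations are produced by hidden algebraic relations among the $f_{ij}$. Concretely, one needs to show that the differential ideal defining $S$ inside $K\{f_{ij}\}[(\det F)^{-1}]$ corresponds, after base change to $L^{\natural}$ via $F \mapsto Z^{PV} P$, to the defining ideal of the Picard--Vessiot ring $R$. This is the step where the algebraic closedness of $K$ is used decisively, in tandem with the universal property of $L^{\natural}$ that allows all fundamental systems of $Y'=AY$ to be compared over a single field. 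Once this identification is in place, the asserted isomorphism of Lie algebras follows by assembling the two previous steps.
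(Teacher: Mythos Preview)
Your proposal has a genuine gap, and the route you sketch diverges from the paper's in a way that matters.

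First, your description of $\mathrm{Lie}(\infgal(L/K))$ as a space of derivations of ``the formal completion of $L\otimes_K L^{\natural}$'' does not match Umemura's definition. In the paper, $\infgal(L/K)(A)$ consists of $\K\hat{\otimes}_{L^{\sharp}}A[[W]]$-automorphisms of the Galois hull $\eL\hat{\otimes}_{L^{\sharp}}A[[W]]$, where $\eL$ is generated by the image $\iota(L)$ of the universal Taylor morphism together with $L^{\sharp}$ inside the partial differential ring $L^{\sharp}[[X]]$, and the automorphisms must commute with $d/dX$ \emph{and} with the auxiliary derivations $D_1,\ldots,D_d$ coming from $\mathrm{Der}(L^{\natural}/K^{\natural})$. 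Your simplified model ignores this second layer of derivations, and without it one cannot even formulate the constraint that singles out the correct Lie algebra.

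Second, and more seriously, your Step~3 is the entire content of the theorem, and you do not carry it out. You assert that $F$ and $Z^{PV}$ can be compared inside ``a suitable common overring'' by a constant matrix $P$, and that the defining ideals of $S$ and $R$ then correspond; but $L$ and $L^{PV}$ are genuinely different extensions of $K$ (recall the example $\bar{\mathbb Q}(e^t)[\pi,\pi^{-1}]$, where $C_L\neq C$), so producing that overring with controlled constants and proving the ideal correspondence \emph{is} the problem. Saying that algebraic closedness of $K$ is ``used decisively'' is not a proof.

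The paper avoids ever placing $F$ and $Z^{PV}$ in a common field. Instead it sets $B:=\iota(Z)(Z^{\sharp})^{-1}$ and proves (Lemma~\ref{1123a}) that $B$ has entries in $K^{\sharp}[[X]]$. This yields $\eS=\iota(K)[B,(\det B)^{-1}].L^{\sharp}$, and linear disjointness (Lemmas~\ref{1126a}, \ref{1201a}, \ref{1202a}) gives the tensor decomposition $\eS\simeq \iota(K)[B,(\det B)^{-1}].K^{\sharp}\otimes_{K^{\sharp}}L^{\sharp}$. One then shows that $Q(\iota(K).K^{\sharp})[B,(\det B)^{-1}]$ is itself a Picard--Vessiot ring for $Y'=\iota(A)Y$ over $Q(\iota(K).K^{\sharp})$, with constant field $K^{\sharp}$; uniqueness of Picard--Vessiot rings (Proposition~\ref{1202e}) then furnishes the comparison with $R$. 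The algebraic closedness of $K$ enters precisely here, to ensure the constant field of the new base is algebraically closed. This chain of lemmas is what replaces your missing Step~3.
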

%We argue the same 
%\begin{theorem}
%Let $S$ be a differential domain generated over $K$ by a system of solutions of the linear differential equation $Y' = AY$ and $R$ be a Picard-Vessiot ring for the equation $Y'=AY$. %=K[Z^{PV}, \, (\det Z^{PV})^{-1}]/K
%The field $L=Q(S)$ is the field of fractions of $S$ and the field $L^{PV} = Q(R)$ is the field of fractions of $R$. 
%There exists a finite field extension $\tilde{L}$ of $L^{\natural}$, we have an isomorphism
%\[
%\mathrm{Lie}\;(\infgal (L/K)) \simeq \mathrm{Lie}\;(\mathrm{Gal}(L^{PV}/K)) \otimes_{C} \tilde{L}. 
%\]
%\end{theorem}
Theorem also holds for a $G$-primitive extension. 

In the case of the field $L$ is Picard-Vessiot field, that is $C_{L}=C_{K}$, Umemura \cite{umemura3} prove similar theorem. 
%Casale \cite{casale} seems that Malgrange Galois group for a linear differential equation coincide with Galois group of Picard-Vessiot group. 
%This result seems to be a special case of \ref{***}. 
Casale seems to have proved the theorem for $\mathrm{tr.d.} L/K = n^{2}$, where $A$ and $F$ are of $n \times n$ degree matrices.
%Or equivalently for the Lie algebra of Malgrange Galois groupoid coincides with the Lie algebra of the Picard-Vessiot extension

We are inspired of \cite{umemura3} but we considerably simplified the argument there.

This paper is organized as follows. In \S 2, we give some definitions and results on Picard-Vessiot theory. In \S 3, we give definitions of general Galois theory. In \S 4 we prove the main results. 
%I would like to thank to Prof. Hiroshi Umemura for giving me some suggestions and advice. 

The author is grateful to his advisor of thesis Associate Professor Y. Ito who taught him algebraic geometry, indispensable to study algebraic differential equations. He also expresses his thanks to Professor Y. Kimura. Discussions with him on the integrability question of vortices was quite useful to understand the importance of Picard-Vessiot theory. His thanks go also to H. Umemura who guided him to the subject, for valuable discussions. 
%%%%%%%%%%%%%%%%%%%%%%%%%%%%%%%%%%%%%%%%%%%%%%%%%%%%%%%%%%%%%%%%%%%%
%%%%%%%%%%%%%%%%%%%%%%%%%%%%%%%%%%%%%%%%%%%%%%%%%%%%%%%%%%%%%%%%%%%%
\section{Picard-Vessiot theory}\label{1202g}
In this section, we recall the results in Picard-Vessiot theory that is Galois theory of linear differential equations. For more details, we refer to \cite{put-singer}. 
All the ring that we consider except for Lie algebras are commutative and unitary $\mathbb{Q}$-algebra. 

\begin{definition}
A derivation on a ring $R$ is a map $\partial : R \rightarrow R$ satisfying the following properties. 
\begin{enumerate}
\renewcommand{\labelenumi}{(\arabic{enumi})}
\item $\partial(a+b) = \partial(a) + \partial(b)$
\item $\partial(ab) = \partial(a)b + a\partial(b)$
\end{enumerate}
for all $a,\,b \in R$. 

We call a ring $R$ equipped with a derivation $\partial$ on $R$ a differential ring and similarly a field $K$ equipped with a derivation $\partial$ a differential field. 
\end{definition}
We say differential ring $S$ is a differential extension of the differential ring $R$ or a differential ring over $R$ if the ring $S$ is an over ring of $R$ and the derivation $\partial_{S}$ of $S$ restricted on $R$ coincide with the derivation $\partial_{R}$ of $R$. 
We will often denote a differential ring equipped with derivation $\partial$ by $(R, \partial)$ and 
$\partial(a)$ by $a'$. A derivation $\partial$ will be sometimes called a differentiation. 
\begin{example} The following rings are differential rings. \\
1. The polynomial ring $\mathbb{Q}[x]$ over $\mathbb{Q}$ with derivation $f \mapsto f'=df/dx$. \\
2. The ring of formal power series $\mathbb{C}[[x]]$ over $\mathbb{C}$ with derivation $f \mapsto f'=df/dx$. \\
3. The ring of holomorphic functions $\mathcal{O}(U)$ on an open connected subset $U \subset \mathbb{C} $ with derivation $f \mapsto f'= df/dz$. \\
\end{example}
\begin{example} The following fields are differential fields. \\
1. The rational function field $\mathbb{Q}(x)$ over $\mathbb{Q}$ with derivation $f \mapsto f'=df/dx$. \\
2. The field of formal Laurent series $\mathbb{C}[[x]][x^{-1}]$ over $\mathbb{C}$ with derivation $f \mapsto f'=df/dx$. \\
3. The field of meromorphic functions $\mathcal{M}(U)$ on an open connected subset $U \subset \mathbb{C} $ with derivation $f \mapsto f'= df/dz$. \\
In general the field of fractions of a differential domain is a differential field. 
\end{example}
\begin{definition}
Let $(R,\, \partial)$ is a differential ring. An element $c \in R$ is called a constant if $c'=0$ and
a set $C_{R}$ denotes the set of all constants of $R$. 
\end{definition}
By definition, the set of constants $C_{R}$ of $(R,\, \partial)$ forms a ring. Similarly $C_{K}$ is a field for a differential field $(K,\, \partial)$. 
We sometimes say the ring of constants $C_{R}$ or the field of constants $C_{R}$. 
%if the set of constants $C_{R}$ forms a ring or 
\begin{example}
1. The set of constants of differential ring $(\mathbb{Q}[x],\, d/dx)$ is $\mathbb{Q}$. \\
2. The set of constants of differential field $(\mathbb{C}(z,\, \exp z),\, d/dz)$ is $\mathbb{C}$. \\
3. We consider a differential ring $(\mathbb{C}(x,\, y),\, \partial)$. The derivation $\partial = \partial/\partial x + \partial /\partial y$ acts as follows. $\partial(\mathbb{C}) = 0,\, \partial(x) = 1,\, \partial(y) = 1$. 
Since $\partial (x -y) = \partial(x) - \partial (y) = 1-1=0$, the constants field of $\mathbb{C}(x,\, y)$ is $\mathbb{C}(x-y)$. 
\end{example}
\begin{definition}
%Let $(R,\, \partial)$ be a differential ring. 
A differential ideal $I$ of a differential ring $(R,\, \partial)$ is an ideal of $R$ satisfying $f' \in I$ for all $f \in I$. 
A simple differential ring is a differential ring whose differential ideals are only $(0)$ and $R$. 
\end{definition}
{\it From now on} let $K$ be a differential field with derivation $\partial$ and we assume that{\it the field $C_{K}=C$ of constants of the base field $K$ is algebraically closed. }
%Let $K$ be a differential field with derivation $\partial$ and the field $C$ of constants of $K$ is an algebraically closed field. 
We consider a linear differential equation 
\[
Y'= AY,\quad A \in M_{n}(K), 
\]
where $Y = (y_{ij})$ is a $n \times n$ matrix of variables $y_{ij}$ and $Y'=(y'_{ij})$. 
\begin{definition}
A Picard-Vessiot ring $(R,\, \partial)$ over $K$ for the equation $Y'=AY$ with $A \in M_{n}(K)$ is a differential ring $R$ over $K$ satisfying:
\begin{enumerate}
\renewcommand{\labelenumi}{(\arabic{enumi})}
\item $R$ is a simple differential ring.
\item There exists a fundamental matrix $F = (f_{ij})\in \mathrm{GL}_{n}(R)$ for $Y'=AY$ so that $F' = (f'_{ij})= AF$. 
\item $R$ is generated as a ring over $K$ by the entries $f_{ij}$ of $F$ and the inverse of the determinant of $F$, i.e., $R=K[f_{ij},\, (\det F)^{-1}]$. 
\end{enumerate}
\end{definition}
%%%%%%%%%%%%%%%%%%%%%%%%%%%%%%%%%%%%%%%%%%%%
\begin{lemma}[Lemma 1.17 \cite{put-singer}]\label{1207a}
Let $R$ be a simple differential ring over $K$. 
\begin{enumerate}
\renewcommand{\labelenumi}{(\arabic{enumi})}
\item $R$ has no zero divisors. 
\item Suppose that $R$ is finitely generated over $K$, then the field of fractions of $R$ has $C$ as a set of constants. 
\end{enumerate}
\end{lemma}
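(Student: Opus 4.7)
The plan is, in each part, to exhibit a proper differential ideal and invoke simplicity of $R$ to force it to be either $(0)$ or $R$; the choice of ideal is engineered so that either conclusion delivers the desired statement.

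For part (1), the strategy is to reduce ``zero divisor $\Rightarrow$ nilpotent $\Rightarrow$ zero''. Given $a, b \in R$ with $ab = 0$ and $b \neq 0$, the ascending union $I = \bigcup_{n} \{r \in R : r b^n = 0\}$ is a differential ideal: differentiating $r b^n = 0$ and multiplying by $b$ yields $r' b^{n+1} = 0$, so $r' \in I$. Since $a \in I$, simplicity forces $I = R$, so $b^N = 0$ for some $N$; hence every zero divisor is nilpotent. To rule out nonzero nilpotents, one shows the nilradical $N$ itself is a differential ideal, which amounts to checking that $\partial$ preserves nilpotence in characteristic zero: the base case $r^2 = 0$ gives $r r' = 0$, hence $r r'' = -(r')^2$, and then $(r')^3 = -r r' r'' = 0$; higher nilpotency indices are treated by iterated differentiation. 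Simplicity then forces $N = 0$, and combined with the first reduction $R$ is a domain.

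For part (2), let $c \in L = Q(R)$ be a constant. The ideal $J = \{r \in R : rc \in R\}$ contains any denominator of $c$ and is differential (since $c' = 0$, $(rc)' = r'c \in R$), so by simplicity $J = R$ and $c \in R$; hence $C_L = C_R$. Next, $C_R$ is a field, because for $0 \neq c \in C_R$ the ideal $cR$ is differential and thus equals $R$, making $c$ a unit with constant inverse. To conclude $C_R = C$, suppose for contradiction some $c \in C_R \setminus C$. Applying $\partial$ to the minimal polynomial of $c$ over $K$ shows it would have constant coefficients, which together with $C$ being algebraically closed forces $c \in C$ whenever $c$ is algebraic over $K$; so $c$ is transcendental over $K$, and the same principal-ideal argument applied to each $c - \alpha$ ($\alpha \in C$) shows these are all units in $R$. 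Algebraic closedness of $C$ then implies $C(c) \hookrightarrow R$, and I would extract the final contradiction by examining the multiplication map $\mu : R \otimes_C C_R \to R$: a ``shortest element'' argument applied to $\ker \mu$, combined with the finite generation of $R$ via Noether normalization, forces $C_R = C$.

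I expect the main obstacle to lie in this last step of part (2). The reductions above, and the proof of part (1), are largely formal: one only needs to verify that certain naturally defined ideals are $\partial$-stable. By contrast, $R \otimes_C C_R$ is not automatically simple when $C_R \supsetneq C$, so one cannot merely quote a constants-extension lemma; the contradiction has to genuinely leverage both the algebraic closedness of $C$ (to invert all $c - \alpha$) and the finite generation of $R$ over $K$, as is done in \cite{put-singer}.
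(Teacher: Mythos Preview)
The paper does not prove this lemma; it is quoted verbatim from \cite{put-singer} as background, with no argument supplied. So there is no in-paper proof to compare against beyond the reference.

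Your treatment of part~(1) is correct and is essentially the standard argument: the union $I=\bigcup_n\{r:rb^n=0\}$ is a differential ideal, and in characteristic~$0$ the nilradical is a differential ideal, so simplicity forces both to be trivial.

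For part~(2) you are on solid ground through the reduction $C_L=C_R$, the observation that $C_R$ is a field, the algebraic case, and the deduction that every $c-\alpha$ with $\alpha\in C$ is a unit in $R$ when $c\in C_R$ is transcendental over $K$. The gap is precisely where you locate it, but the tool you propose does not close it. The multiplication map $\mu:R\otimes_C C_R\to R$ is \emph{never} injective once $C_R\supsetneq C$: for any $c\in C_R\setminus C$ one has $c\otimes 1-1\otimes c\in\ker\mu\setminus\{0\}$. Running a shortest-element argument on $\ker\mu$ only tells you that in a minimal relation $\sum r_ic_i=0$ the ratios $r_i/r_1$ are again constants, which is circular; Noether normalization does not obviously rescue this.

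The argument in \cite{put-singer} for the transcendental case is geometric rather than linear-algebraic. Since $R$ is finitely generated over $K$, it is finitely generated over $K[c]$, so $\operatorname{Spec}R\to\operatorname{Spec}K[c]=\mathbb{A}^1_K$ is dominant of finite type; by Chevalley its image is constructible and therefore has finite complement in $\mathbb{A}^1_K$. But that image misses every closed point $(c-\alpha)$ with $\alpha\in C$, and $C$ is infinite. This is the step where finite generation and the algebraic closedness of $C$ are genuinely combined, and it replaces your tensor-product sketch.
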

%%%%%%%%%%%%%%%%%%%%%%%%%%%%%%%%%%%%%%%%%%%%
By Lemma \ref{1207a}, 
Picard-Vessiot ring $R$ is a domain and the field $Q(R)$ of fractions of $R$ has $C$ as the constants field. The following Proposition says existence and uniqueness of a Picard-Vessiot ring. 
\begin{proposition}[Proposition 1.20 \cite{put-singer}]\label{1202e}
Let $Y'=AY$ be a matrix differential equation over $K$. 
%We assume that the field $C$ of constants of $K$ is algebraically closed. 
\begin{enumerate}
\renewcommand{\labelenumi}{(\arabic{enumi})}
\item There exists a Picard-Vessiot ring $R$ for the equation. 
\item Any two Picard-Vessiot rings for the equation are isomorphic. 
\item The field of constants of the quotient field $Q(R)$ of a Picard-Vessiot ring is $C$. 
\end{enumerate}
\end{proposition}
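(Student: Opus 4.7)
The plan is to prove existence (1), deduce (3) immediately from Lemma \ref{1207a}(2), and finally use (3) to prove uniqueness (2).

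For existence, I would work in the ``universal'' differential $K$-algebra
\[
U := K[X_{ij},\, (\det X)^{-1}]_{1 \le i,j \le n},
\]
where $X = (X_{ij})$ is a matrix of $n^{2}$ indeterminates and the derivation of $K$ is extended to $U$ by imposing $X' = AX$ (the induced relation $(\det X)' = \operatorname{tr}(A)\det(X)$ is compatible with the localization at $\det X$). A standard Zorn's lemma argument on proper differential ideals yields a maximal such ideal $M \subset U$, and the quotient $R := U/M$ is simple, finitely generated over $K$, and carries the image of $X$ as a fundamental matrix in $\mathrm{GL}_{n}(R)$; so it is a Picard-Vessiot ring. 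Part (3) then falls out at once: any Picard-Vessiot ring is simple and finitely generated over $K$, so Lemma \ref{1207a}(2) gives $C_{Q(R)} = C$.

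For uniqueness, let $R_1, R_2$ be two Picard-Vessiot rings with fundamental matrices $F_1, F_2$. I would form $T := R_1 \otimes_K R_2$ as a differential $K$-algebra and pick a maximal proper differential ideal $N$ of $T$, obtaining a simple, finitely generated quotient $S := T/N$. The images $\bar F_1, \bar F_2 \in \mathrm{GL}_n(S)$ are both fundamental matrices of $Y' = AY$, so the product rule yields $(\bar F_1^{-1}\bar F_2)' = 0$, whence $\bar F_1^{-1}\bar F_2 \in \mathrm{GL}_{n}(C_S)$. Applying (3) to $S$ gives $C_{Q(S)} = C$, and the obvious inclusion $C_S \subseteq C_{Q(S)}$ then forces $C_S = C$. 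Thus $\bar F_2 = \bar F_1 \cdot P$ for some $P \in \mathrm{GL}_n(C)$, so the two subrings $K[\bar F_i,\, (\det \bar F_i)^{-1}]$ of $S$ coincide. Each natural map $R_i \to S$ has kernel a proper differential ideal (proper because $1 \mapsto 1$) of the simple ring $R_i$, hence is injective. Both $R_1$ and $R_2$ are therefore isomorphic to this common subring, and so $R_1 \cong R_2$ as differential $K$-algebras.

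The main obstacle is the uniqueness argument. The trick of passing to a simple quotient of the tensor product, so that the two fundamental matrices sit in a single ring, is delicate because Lemma \ref{1207a}(2) supplies the constants of the fraction field rather than of the ring itself, so one has to lean on $C_S \subseteq C_{Q(S)}$ as a bridge to conclude $\bar F_1^{-1}\bar F_2 \in \mathrm{GL}_n(C)$. Beyond that, existence is a formal construction and (3) is a direct appeal to the recalled lemma.
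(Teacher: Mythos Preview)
The paper does not prove this proposition at all: it is quoted verbatim as Proposition~1.20 of \cite{put-singer} and used as a black box. Your sketch is correct and is in fact the standard argument found in that reference---construct the universal solution algebra $K[X_{ij},(\det X)^{-1}]$, quotient by a maximal differential ideal for existence, and for uniqueness pass to a simple quotient of $R_1\otimes_K R_2$ so that the two fundamental matrices differ by a constant matrix.

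One small imprecision: when you write ``Applying (3) to $S$'', the ring $S$ is not \emph{a priori} a Picard--Vessiot ring (it is generated by both $\bar F_1$ and $\bar F_2$), so you cannot invoke part~(3) as stated. What you actually need---and what your final paragraph correctly identifies---is Lemma~\ref{1207a}(2) applied directly to the simple, finitely generated ring $S$; that gives $C_{Q(S)}=C$ and hence $C_S=C$. With that adjustment the argument is complete, and in fact once you know $\bar F_2=\bar F_1 P$ with $P\in\mathrm{GL}_n(C)$ you also see that $S$ itself equals $K[\bar F_1,(\det\bar F_1)^{-1}]$, so the maps $R_i\to S$ are isomorphisms rather than merely injections onto a common subring.
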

%From now on we assume that the field $C_{K}$ of constants of the base field $K$ is algebraically closed. 
We also consider a Picard-Vessiot field. 
\begin{definition}
A Picard-Vessiot field for the equation $Y'= AY$ over $K$ is the field of fractions of Picard-Vessiot ring for this equation. 
\end{definition}
The following Proposition characterizes a Picard-Vessiot field. 
%A Picard-Vessiot field is characterized by the following Proposition. 
\begin{proposition}[Proposition 1.22 \cite{put-singer}]\label{1202i}
Let $Y'=AY$ be a matrix differential equation over $K$ and let $L \supset K$ be an extension of differential fields. 
The field $L$ is a Picard-Vessiot field for this equation if and only if the following conditions are satisfied.
\begin{enumerate}
\renewcommand{\labelenumi}{(\arabic{enumi})}
\item The field of constants of $L$ is $C$. 
\item There exist a fundamental matrix $F \in \mathrm{GL}_{n}(L)$ for the equation, and 
\item the field $L$ is generated over $K$ by the entries of $F$. 
\end{enumerate}
\end{proposition}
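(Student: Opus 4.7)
The plan is to handle the two directions of the equivalence separately. The direction $(\Rightarrow)$ is immediate from the definitions: if $L = Q(R)$ for a Picard-Vessiot ring $R$, then the fundamental matrix $F \in \mathrm{GL}_{n}(R) \subset \mathrm{GL}_{n}(L)$ witnesses (2); the equality $R = K[f_{ij}, (\det F)^{-1}]$ combined with $L = Q(R)$ gives $L = K(f_{ij})$, i.e.\ (3); and (1) is exactly Proposition \ref{1202e}(3).

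For $(\Leftarrow)$, assume conditions (1)--(3) and set $R := K[f_{ij}, (\det F)^{-1}] \subset L$. Using $F' = AF$ and $(\det F)' = \mathrm{tr}(A) \det F$, the subring $R$ is a differential $K$-subalgebra of $L$, and conditions (2), (3) of the Picard-Vessiot ring definition are built in. It therefore suffices to prove that $R$ is a simple differential ring, for then $R$ is a Picard-Vessiot ring and $L = K(f_{ij}) = Q(R)$ is a Picard-Vessiot field by definition. Let $M$ be a maximal differential ideal of $R$; the image $\bar F$ of $F$ in $R/M$ remains a fundamental matrix (since $\det F$ is a unit in $R$), so $R/M$ is itself a Picard-Vessiot ring, and the task reduces to showing $M = 0$.

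To compare $R$ and $R/M$, I would form the differential ring $L \otimes_{K} R/M$ with the diagonal derivation. Both $F \otimes 1$ and $1 \otimes \bar F$ are fundamental matrices for $Y' = AY$ in this ring, and since the entries of $A$ lie in $K$ (central across the tensor), a short Leibniz calculation gives $V' = 0$ for $V := (F^{-1} \otimes 1)(1 \otimes \bar F)$. Choose a maximal differential ideal $I$ of $L \otimes_{K} R/M$ and let $N := (L \otimes_{K} R/M)/I$. Then $N$ is a simple differential ring finitely generated over the differential field $L$ whose constant field $C$ is algebraically closed, so by Lemma \ref{1207a} applied over $L$ we conclude that $N$ is a domain and $C_{Q(N)} = C_{L} = C$. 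Hence the image $\bar V$ of $V$ in $N$ lies in $\mathrm{GL}_{n}(C) \subset \mathrm{GL}_{n}(K)$, and the relation $1 \otimes \bar F \equiv (F \otimes 1) \bar V \pmod{I}$ shows that the images of $R$ (via $R \hookrightarrow L \hookrightarrow N$) and of $R/M$ (via $\bar r \mapsto (1 \otimes \bar r) + I$) coincide as $K$-subalgebras of $N$.

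Both inclusions $R \hookrightarrow L \hookrightarrow N$ and $R/M \hookrightarrow N$ are injective: the second step in the first chain because its kernel is a differential ideal of the field $L$, and the second because $R/M$ is simple. Identifying $R$ and $R/M$ with their common image in $N$ thus produces an isomorphism $\phi : R \to R/M$ of differential $K$-algebras. Composing $\phi^{-1}$ with the canonical surjection $q : R \twoheadrightarrow R/M$ yields a surjective differential endomorphism of $R$ whose kernel is exactly $M$. Since $R$ is a finitely generated $K$-algebra and therefore Noetherian, and since every surjective endomorphism of a commutative Noetherian ring is injective, we conclude $M = 0$, so $R$ is simple. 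The principal obstacle in this plan is the careful tracking of constants through the tensor product $L \otimes_{K} R/M$: condition (1) enters essentially in identifying $C_{Q(N)}$ with $C$, which is what forces $\bar V \in \mathrm{GL}_{n}(K)$ and makes the subsequent comparison work.
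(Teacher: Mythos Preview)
The paper does not itself prove this proposition: it is quoted verbatim as Proposition~1.22 of \cite{put-singer} and used as a black box in the later arguments, so there is no ``paper's own proof'' to compare against beyond the reference to van der Put--Singer.

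Your argument is correct and is essentially the standard proof from \cite{put-singer}. The forward direction is indeed immediate. For the reverse direction, your strategy---form $R=K[f_{ij},(\det F)^{-1}]$, take a maximal differential ideal $M$, compare $R$ and $R/M$ inside a simple quotient $N$ of $L\otimes_{K}R/M$, and conclude $M=0$ via the Noetherian surjective-endomorphism trick---matches the approach in the reference. The two points that deserve emphasis are exactly the ones you flag: condition~(1) on $C_{L}$ is what forces $C_{Q(N)}=C$ (via Lemma~\ref{1207a} applied over $L$), hence $\bar V\in\mathrm{GL}_{n}(C)\subset\mathrm{GL}_{n}(K)$; and the injectivity of $L\to N$ and $R/M\to N$ follows because their kernels are differential ideals of a field and of a simple differential ring, respectively, while $N\neq 0$. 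One small stylistic remark: when you say ``produces an isomorphism $\phi:R\to R/M$'', it is worth making explicit that $\phi$ is the composite of the injection $R\hookrightarrow N$ with the inverse of the injection $R/M\hookrightarrow N$, restricted to the common image; as written this is clear from context, but spelling it out avoids any impression that $\phi$ is canonical or related to $q$.
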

%%%%%%%%%%%%%%%%%%%%%%%%%%%%%%%%%%%%%%%%%%%%
We give the following examples of Picard-Vessiot ring and field. 
\begin{example}\label{1209a}
We work over $(K, \, \partial)= (\mathbb{C}(x),\, d/dx)$. We consider a linear differential equation 
\begin{equation}\label{1209d}
\frac{dy}{dx} = y. 
\end{equation}
We take a differential extension ring $R := \mathbb{C}(x)[\exp x, (\exp x)^{-1}]$ over $K$ and the field of fractions $L:=Q(R)= \mathbb{C}(\exp x)$ of $R$. 
%The field $L$ has a fundamental solution $y= \exp x$ of the equation $y'=y$. 
Since the field $L$ satisfies the three conditions of Proposition \ref{1202i}, the field $L$ is a Picard-Vessiot field and the ring $R$ is a Picard-Vessiot ring for equation \eqref{1209d}. 
\end{example}
\begin{example}\label{1209b}
We work over $(K, \, \partial)= (\mathbb{C}(x),\, d/dx)$. We consider a linear differential equation
\begin{equation}\label{1209c}
y'' + \frac{1}{x} y' = 0. 
\end{equation}
We denote this equation by the matrix equation
\begin{equation}\label{1209e}
Y' = 
\begin{pmatrix}
0 & 1\\
0 & -\frac{1}{x}
\end{pmatrix}Y. 
\end{equation}
We take two solutions $y_{1} = \log x$ and $y_{2} = 1$ of equation \eqref{1209c} and set
\[
F := \begin{pmatrix}
y_{1} & y_{2} \\
y'_{1} & y'_{2}
\end{pmatrix} = \begin{pmatrix}
\log x & 1\\
\frac{1}{x} & 0
\end{pmatrix}. 
\]
Since $\det F = 1/x \neq 0$, the matrix $F$ is a fundamental matrix of equation \eqref{1209e}. 
So we take a differential ring 
\[
R := K[F,\, (\det F)^{-1}] = \mathbb{C}(x)[\log x ,\, x, \, 1,\, 0,\, x] = \mathbb{C}(x)[\log x]
\]
and the field of fractions \[
L:=Q(R)= \mathbb{C}(x)(\log x)
\] of $R$. 
Then the field $L$ satisfies the three conditions of Proposition \ref{1202i}. So $L$ is a Picard-Vessiot field and the ring $R$ is a Picard-Vessiot ring for equation \eqref{1209c}. 
\end{example}
%%%%%%%%%%%%%%%%%%%%%%%%%%%%%%%%%%%%%%%%%%%%

We define the Galois group for Picard-Vessiot extension. 
\begin{definition}
The differential Galois group of an equation $Y'=AY$ over $K$ is defined as the group $\mathrm{Gal}(L/K)$ 
of differential $K$-automorphism of a Picard-Vessiot field $L$ for the equation. 
\end{definition}
%%%%%%%%%%%%%%%%%%%%%%%%%%%%%%%%%%%%%%%%%%%
We can think of the differential Galois group $\mathrm{Gal}(L/K)$ as a subgroup of $\mathrm{GL}_{n}(C)$ in the following manner. 
Let $F \in \mathrm{GL}_{n}(L)$ be a fundamental matrix so that $F'= AF$. 
The image of $\sigma(F)$ for $\sigma \in \mathrm{Gal}(L/K)$ is also fundamental matrix for the equation. In fact, 
\[
\sigma(F)' = \sigma(F') = \sigma(AF) = \sigma (A) \sigma(F) = A\sigma(F). 
\]
We consider the matrix $C(\sigma) = F^{-1}\sigma(F) \in \mathrm{GL}_{n}(L)$. The matrix $C(\sigma)$ is also in $\mathrm{GL}_{n}(C)$. 
Indeed
\begin{align*}
C(\sigma)' = (F^{-1}\sigma(F) )'& = (F^{-1})' \sigma(F) + F^{-1}\sigma(F)' \\&= -F^{-1}F'F^{-1} \sigma(F) + F^{-1}A\sigma(F) \\&= -F^{-1}(AF) F^{-1}  \sigma(F) + F^{-1}A\sigma(F) = O. 
\end{align*}
So we get the map $\mathrm{Gal}(L/K) \rightarrow \mathrm{GL}_{n}(C), \quad \sigma \mapsto C(\sigma)$ which is an injective group homomorphism. 
%%%%%%%%%%%%%%%%%%%%%%%%%%%%%%%%%%%%%%%%%%%
Following Theorem says the Galois group has a linear algebraic group structure over the constants field $C$. 
\begin{theorem}[Theorem 1.27 \cite{put-singer}]
Let $Y'=AY$ be a linear differential equation over $K$, having Picard-Vessiot field $L \supset K$ 
and differential Galois group $G=\mathrm{Gal}(L/K)$. Then 
\begin{enumerate}
\renewcommand{\labelenumi}{(\arabic{enumi})}
\item $G$, considered as a subgroup of $\mathrm{GL}_{n}(C)$, is an algebraic group defined over $C$. 
\item The Lie algebra of $G$ coincides with the Lie algebra of the derivations of $L/K$ that commute with the derivation $\partial_{L}$ on $L$.
\item The field $L^{G}$ of $G$-invariant elements of $L$ is equal to $K$. 
\end{enumerate}
\end{theorem}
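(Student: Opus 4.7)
The plan is to extract all three statements from a single structural fact about $R \otimes_K R$, where $R = K[F, (\det F)^{-1}]$ is the Picard-Vessiot ring. Equip $R \otimes_K R$ with the derivation $\partial \otimes 1 + 1 \otimes \partial$. Both $F \otimes 1$ and $1 \otimes F$ satisfy $Y' = AY$, so
\[
Z := (F \otimes 1)^{-1}(1 \otimes F) \in \mathrm{GL}_n(R \otimes_K R)
\]
is annihilated by this derivation; let $H := C[Z_{ij}, (\det Z)^{-1}]$, a $C$-subalgebra of the constants of $R \otimes_K R$. The key lemma to prove is that the multiplication map
\[
R \otimes_C H \longrightarrow R \otimes_K R, \qquad r \otimes h \mapsto (r \otimes 1)\, h,
\]
is an isomorphism of $R$-algebras. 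Surjectivity is immediate from $1 \otimes F = (F \otimes 1)\, Z$. For injectivity, I would pick a nonzero element $\sum r_i \otimes h_i$ of the kernel with the $h_i$ being $C$-linearly independent and the number of terms minimal; applying $\partial \otimes 1$ (which kills the $h_i$) produces a shorter relation, and minimality forces each $r_i$ to lie in $C_R = C$, collapsing the relation. The main obstacle of the whole theorem lies in this step, where simplicity of $R$ (Lemma \ref{1207a}) and the equality $C_R = C$ are essential.

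With the structural lemma in hand, part (1) follows by endowing $H$ with the Hopf algebra structure obtained from comparing the two natural insertions $R \otimes_K R \rightrightarrows R \otimes_K R \otimes_K R$. For each $\sigma \in G = \mathrm{Gal}(L/K)$, the differential $K$-algebra map $R \otimes_K R \to R$, $r \otimes s \mapsto r\sigma(s)$, restricts under the structural isomorphism to a character $H \to C$ whose value on $Z$ is exactly the matrix $C(\sigma) = F^{-1}\sigma(F) \in \mathrm{GL}_n(C)$ introduced earlier; conversely, every $C$-character of $H$ arises this way, since it determines a matrix in $\mathrm{GL}_n(C)$ and hence a differential $K$-automorphism of $R$. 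Therefore $G = (\mathrm{Spec}\,H)(C)$ is a linear algebraic subgroup of $\mathrm{GL}_n(C)$ defined over $C$.

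For part (2), $\mathrm{Lie}(G)$ consists of $C$-algebra maps $H \to C[\varepsilon]/(\varepsilon^2)$ over the counit. Composing such a $\delta$ with the coaction $\rho: R \to R \otimes_C H$ produced by the lemma yields a $K$-linear derivation $D_\delta$ of $R$ extending uniquely to $L$; it commutes with $\partial_L$ because $\rho$ is a differential homomorphism whose target has $H$ in the constants. Conversely, any $K$-linear derivation $D$ of $L$ that commutes with $\partial_L$ sends every solution of $Y' = AY$ to another solution, so $D(F) = F M$ for a unique constant matrix $M \in M_n(C)$; checking that $M$ lies in the Zariski tangent space of $G$ at the identity is direct from the defining equations of $\mathrm{Spec}\,H$ in $\mathrm{GL}_n$, and the two constructions are mutually inverse.

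For part (3), given $\ell \in L^G$, write $\ell = a/b$ with $a, b \in R$ and form $\omega := a \otimes b - b \otimes a \in R \otimes_K R$. For every $\sigma \in G$, the map $r \otimes s \mapsto r\sigma(s)$ sends $\omega$ to $a\sigma(b) - b\sigma(a) = 0$. Under the structural isomorphism these are precisely the evaluation maps at the $C$-points $\sigma \in G(C) \subset \mathrm{Spec}\,H$, and their joint kernel in $R \otimes_C H$ vanishes because $H$ is reduced and $G(C)$ is Zariski dense in $\mathrm{Spec}\,H$ (we are in characteristic zero with $C$ algebraically closed). Hence $a \otimes b = b \otimes a$; inverting $b$ and passing to $L \otimes_K L$ gives $\ell \otimes 1 = 1 \otimes \ell$, which in characteristic zero forces $\ell \in K$.
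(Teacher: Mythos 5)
The paper itself offers no proof of this statement: it is quoted as Theorem 1.27 of van der Put--Singer and used as a black box, so there is nothing internal to compare against. Your argument is essentially the standard torsor-theoretic proof from that source: split $R\otimes_K R\simeq R\otimes_C H$ with $H$ generated by the constant matrix $Z=(F\otimes 1)^{-1}(1\otimes F)$, read off the Hopf algebra structure and the identification of $G$ with the $C$-points of $\mathrm{Spec}\,H$, and deduce (2) from the coaction and (3) from Zariski density of $G(C)$ in $\mathrm{Spec}\,H$ (using that $H$ is reduced, which in characteristic zero is Cartier's theorem and deserves explicit mention). The overall architecture is sound and all three parts do follow from the structural lemma as you describe.

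One step, as written, would fail. In the injectivity argument you say that applying $\partial\otimes 1$ to a minimal relation $\sum r_i\otimes h_i$ ``produces a shorter relation''; it does not --- it produces $\sum \partial(r_i)\otimes h_i$, which has exactly as many terms unless some $\partial(r_i)$ happens to vanish. The simplicity of $R$ must be invoked \emph{before} differentiating: the set of elements of $R$ occurring as the coefficient of $h_1$ in some element of the kernel supported on $h_1,\dots,h_m$ is a nonzero differential ideal of $R$, hence equals $R$, so one may normalize $r_1=1$; only then does $\partial\otimes 1$ annihilate the first term and yield a relation of length at most $m-1$, which minimality forces to be zero, putting every $r_i$ in $C_R=C$ and contradicting the $C$-linear independence of the $h_i$ (via injectivity of the map on $1\otimes H$). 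You correctly flag simplicity and $C_R=C$ as the essential inputs, but the order in which they enter is what makes the induction close. A final minor point: the implication $\ell\otimes 1=1\otimes\ell\Rightarrow\ell\in K$ at the end of (3) is a general fact about field extensions and needs no characteristic-zero hypothesis.
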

%%%%%%%%%%%%%%%%%%%%%%%%%%%%%%%%%%%%%%%%%%%%%%%%%%%%%%
\begin{example}The Picard-Vessiot extension $\mathbb{C}(x, \, \exp x)/ \mathbb{C}$ for $y'=y$ in Example \ref{1209a}. 
We take a fundamental solution $y =\exp x$ and an element of the Galois group $\sigma \in \mathrm{Gal}(\mathbb{C}(x, \, \exp x)/ \mathbb{C})$. We have 
\[
y^{-1}\sigma(y) = c \in \mathrm{GL}(\mathbb{C}) \simeq \mathbb{C}^{\times} \simeq \mathbf{G}_{m}(\mathbb{C}). \]
%Then 
The image $\sigma(y) = yc = c \exp x$ is again a fundamental solution of $y'=y$. 
Then we get $\mathrm{Gal}(\mathbb{C}(x, \, \exp x)/ \mathbb{C}) \simeq \mathbf{G}_{m}$. 
So the Galois group of the Picard-Vessiot extension $\mathbb{C}(x, \, \exp x)/ \mathbb{C}$ is the multiplicative group $\mathbf{G}_{m}(\mathbb{C})$
\end{example}
\begin{example}The Picard-Vessiot extension $\mathbb{C}(x)(\log x)/\mathbb{C}(x)$ for $y'' + (1/x) y' = 0. $ in Example \ref{1209b}. 
We take a fundamental matrix
\[
F = \begin{pmatrix}
\log x & 1\\
\frac{1}{x} & 0
\end{pmatrix}. 
\] and an element of the Galois group $\sigma \in \mathrm{Gal}(\mathbb{C}(x)(\log x)/\mathbb{C}(x))$. Then
\[
F^{-1} \sigma (F) = C(\sigma) = \begin{pmatrix}
a & b\\
c & d
\end{pmatrix} \in \mathrm{GL}(\mathbb{C})
\] 
So we have 
\[
\sigma(F) = \begin{pmatrix}
\log x & 1\\
\frac{1}{x} & 0
\end{pmatrix} \begin{pmatrix}
a & b\\
c & d
\end{pmatrix} .
\]Since entries $1/x,\, 1,\, 0$ of $F$ are in $\mathbb{C}(x)$, these entries are invariant under $\sigma$. Hence we get $a = d = 1$ and $b = 0$. 
\[
\mathrm{Gal}(\mathbb{C}(x)(\log x)/\mathbb{C}(x)) \simeq \left\{
\begin{pmatrix}
1 & 0\\
c & 1
\end{pmatrix}
\right\} \simeq \mathbf{G}_{a}(\mathbb{C})
\]
Then the Galois group of the Picard-Vessiot extension $\mathbb{C}(x)(\log x)/\mathbb{C}(x)$ is the additive group $\mathbf{G}_{a}(\mathbb{C})$.
\end{example}
%%%%%%%%%%%%%%%%%%%%%%%%%%%%%%%%%%%%%%%%%%%%%%%%%%%%%%
Following Proposition says there exists a Galois correspondence. 
\begin{proposition}[Proposition 1.34 \cite{put-singer}]\label{1202f}
Let $Y'=AY$ be a differential equation over $K$ with Picard-Vessiot field $L$ with Galois group $G := \mathrm{Gal}(L/K)$. We consider the two sets\\
\[
\mathbb{S}:= \{ \text{ closed subgroups of } G \text{ } \}
\] and
\[
\mathbb{L}:= \{ \text{ differential subfields } M \text{ of } L \text{, containing } K \text{ } \}. 
\] \\
%We define the map $\alpha \colon \eS \to \eL$ by $\alpha(H) = L^{H}$, 
%the subfield of $L$ consisting of the $H$-invariant elements. 
We define the map \[
\alpha \colon \mathbb{S} \to \mathbb{L},\; H \mapsto L^{H}= \{ a \in L \,|\, \sigma(a) = a,\, \text{ for every } \sigma \in H \}, 
\] 
%We define $\beta \colon \eL \to \eS$ by $\beta(M) = \mathrm{Gal}(L/M)$, which is the subgroup of $G$ consisting of the differential $M$-automorphisms. 
and %we define the map
\[
\beta \colon \mathbb{L} \to \mathbb{S},\, M \mapsto \mathrm{Gal}(L/M)= \{ \sigma \in G \,|\, \sigma|_{M} = Id_{M} \}. 
\]
Then
\begin{enumerate}
\renewcommand{\labelenumi}{(\arabic{enumi})}
\item The maps $\alpha$ and $\beta$ are mutually inverse. 
\item If the subgroup $H\in \mathbb{S}$ is a normal subgroup of $G$ then $M=L^{H}$ is invariant under $G$. Conversely, if $M \in \mathbb{L}$ is invariant under $G$ then $H=Gal(L/M)$ is a normal subgroup of $G$. 
\item If $H \in \mathbb{S}$ is normal subgroup of $G$ then the canonical map $G \to \mathrm{Gal}(M/K)$ is surjective and has kernel $H$. 
Moreover, $M$ is a Picard-Vessiot field for some linear differential equation over $K$ with Galois group $G/H$. 
\item Let $G^{o}$ denote the identity component of the algebraic group $G$. Then $L^{G^{o}} \supset K$ is a finite Galois extension with Galois group $G/G^{o}$ and is the algebraic closure of $K$ in $L$. 
\end{enumerate}
\end{proposition}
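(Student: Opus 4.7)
The plan is to prove parts (1)--(4) in turn, relying on two tools already available: the fact that $L^G = K$ whenever $L/K$ is Picard-Vessiot with Galois group $G$ (from the theorem stated just before), and the observation that $L$ remains Picard-Vessiot over any intermediate differential field $M$. Together with Chevalley's theorem characterizing closed subgroups of algebraic groups as stabilizers of lines in rational representations, these suffice for the whole correspondence.

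For part (1), I would begin with the easier direction $\alpha \circ \beta = \mathrm{id}_{\mathbb{L}}$. Given $M \in \mathbb{L}$, the same fundamental matrix $F$ satisfies $F' = AF$, generates $L$ over $M$, and the constant field of $L$ remains $C$, so $L/M$ is Picard-Vessiot for $Y' = AY$. Applying the preceding theorem to this extension yields $L^{\mathrm{Gal}(L/M)} = M$, i.e. $\alpha(\beta(M)) = M$. The opposite direction $\beta \circ \alpha = \mathrm{id}_{\mathbb{S}}$ is harder. For closed $H \le G$, the inclusion $H \subseteq \tilde H := \mathrm{Gal}(L/L^H)$ is immediate. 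Suppose $H \subsetneq \tilde H$. By Chevalley's theorem one obtains a finite-dimensional rational $\tilde H$-representation $V$ and a line $\ell \subset V$ with $H = \mathrm{Stab}_{\tilde H}(\ell)$. The main technical obstacle is to embed $V$ equivariantly as a $\tilde H$-stable $C$-subspace of $L$. For this I would use the torsor structure of the Picard-Vessiot ring, namely the isomorphism $R \otimes_K R \simeq R \otimes_C \mathcal{O}(G)$, which shows that every rational $G$-representation, and hence every rational $\tilde H$-representation, appears inside $R$ and a fortiori inside $L$. Once $V \subset L$ is produced, a nonzero vector in $\ell$ gives an element of $L$ fixed by $H$ but not by $\tilde H$, contradicting $L^H = L^{\tilde H}$ which follows from part (1) already established for $\tilde H$.

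For part (2), normality of $H$ in $G$ is equivalent to $G$-stability of $L^H$ via the identity $L^{gHg^{-1}} = g(L^H)$, combined with the bijectivity established in (1). For part (3), assuming $H$ normal, the restriction map $\rho : G \to \Aut^{\partial}(M/K)$ has kernel exactly $H$ by (1). To see $M$ is itself Picard-Vessiot over $K$, I would produce a finite-dimensional $G/H$-stable $K$-subspace $W \subset M$ generating $M/K$ and consisting of solutions of a linear differential equation over $K$, by applying Chevalley once more to the faithful action of the algebraic quotient $G/H$ and exploiting again the availability of rational representations inside $L$; the induced map $G/H \hookrightarrow \mathrm{GL}(W)(C)$ is then closed and Zariski dense, whence surjective onto $\Gal(M/K)$.

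Finally, for part (4), the identity component $G^o$ is normal of finite index, so by (3) the extension $L^{G^o}/K$ is Picard-Vessiot with finite Galois group $G/G^o$, hence a finite classical Galois extension. It must be the algebraic closure of $K$ in $L$: any $a \in L$ algebraic over $K$ generates, via its finitely many $G$-conjugates, a finite-dimensional $G$-stable $K$-subspace of $L$, on which the connected algebraic group $G^o$ necessarily acts trivially (its image in the finite automorphism group of that subspace is both connected and finite), so $a \in L^{G^o}$. The sole serious difficulty in the whole argument is the realization of arbitrary rational representations of $G$ inside $L$ required in paragraph two; the rest is formal manipulation once that tool is in hand.
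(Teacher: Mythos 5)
The paper does not actually prove this proposition: it is imported verbatim as Proposition 1.34 of \cite{put-singer}, so there is no internal argument to compare yours against. Judged on its own merits, your outline follows the standard textbook route (the theorem $L^{G}=K$ applied to $L/M$ for the easy composite, Chevalley's theorem plus the torsor structure for the hard one, and the finite-orbit argument for (4), which is fine), but two steps in the hard direction of (1) do not work as written.

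First, Chevalley gives you a line $\ell=Cv$ whose \emph{stabilizer} in $\tilde H$ is $H$; elements of $H$ therefore act on $v$ through a character $\chi$ of $H$, and $v$ is fixed by $H$ only when $\chi$ is trivial. So the claim that ``a nonzero vector in $\ell$ gives an element of $L$ fixed by $H$'' is false in general, and the intended contradiction with $L^{H}=L^{\tilde H}$ does not materialize. The standard repair is to pass from the semi-invariant $v$ to a genuine $H$-invariant, e.g.\ the logarithmic derivative $v'/v$ (one checks that $\sigma(v'/v)=v'/v$ exactly when $\sigma$ stabilizes the line $Cv$), or a ratio of two semi-invariants with the same character. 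Second, the torsor isomorphism $R\otimes_{K}R\simeq R\otimes_{C}\mathcal{O}(G)$ realizes every rational representation of $G$ inside $R\otimes_{K}R$, with $G$ acting through one tensor factor on which the other factor is constant; it does not show that every representation ``appears inside $R$ and a fortiori inside $L$.'' Passing from $R\otimes_{K}R$ to $R$ requires a further descent or specialization step (or a direct analysis of the finite-dimensional $G$-stable $C$-subspaces of $R$ spanned by monomials in the $z_{ij}$ and $(\det Z)^{-1}$, together with a reason the representation produced by Chevalley occurs among them). This is exactly the technical heart of the correspondence and it is the point your sketch leaves unproved. The same omission propagates to part (3): exhibiting a finite-dimensional $G/H$-stable space of generators of $M$ over $K$ satisfying a linear differential equation over $K$ is the hard content of the statement that $M$ is again Picard--Vessiot, and ``applying Chevalley once more'' does not by itself produce it.
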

%%%%%%%%%%%%%%%%%%%%%%%%%%%%%%%%%%%%%%%%%%%%%%%%%%%%%%%%%%%%%%%%%%%%%%%%%
\section{General Galois theory}
\subsection{Universal Taylor morphism}
For a differential ring $(R,\, \partial)$, we will denote the abstract ring $R$ by $R^{\natural}$. 
Let $(R, \{\partial_{1},\, \partial, \cdots , \, \partial_{d} \})$ be a partial differential ring. So 
$\partial_{i}$ are mutually commutative derivations of $R$ such that we have 
\[
[\partial_{i},\,\partial_{j}]= \partial_{i}\partial_{j} - \partial_{j}\partial_{i} = 0,\quad \mathrm{for}\, 1\leq i,j \leq d
\]
For example, the ring of power series 
\[\left(S[[X_{1},\, X_{2}, \cdots , \, X_{d}]],\, \left\{ \frac{\partial}{\partial X_{1}}, \, \frac{\partial}{\partial X_{2}}, \, \cdots ,\, \frac{\partial}{\partial X_{d}}\right\} \right)\]
is a partial differential ring for a $\mathbb{Q}$-algebra $S$. 
%%%%%%%%%%%%%%%%%%%%%%%%%%%%%%%%%%%%%%%%%%%%%%

We call a morphism 
\begin{align}
(R, \{\partial_{1},\, \partial, \cdots , \, \partial_{d} \}) &   \notag \\
\longrightarrow &\left(S[[X_{1},\, X_{2}, \cdots , \, X_{d}]],\, \left\{ \frac{\partial}{\partial X_{1}}, \, \frac{\partial}{\partial X_{2}}, \, \cdots ,\, \frac{\partial}{\partial X_{d}}\right\} \right) \label{1124d}
\end{align}
of differential ring by a Taylor morphism. 
Among Taylor morphisms \eqref{1124d}, there exists the universal one $\iota_{R}$. 
\begin{definition}
The universal Taylor morphism $\iota_{R}$ is a differential morphism
\begin{align}
(R, \{\partial_{1},\, \partial, \cdots , \, \partial_{d} \}) &   \notag \\
\longrightarrow &\left(R^{\natural}[[X_{1},\, X_{2}, \cdots , \, X_{d}]],\, \left\{ \frac{\partial}{\partial X_{1}}, \, \frac{\partial}{\partial X_{2}}, \, \cdots ,\, \frac{\partial}{\partial X_{d}}\right\} \right)
\end{align}
by setting 
\[
\iota_{R} (a) = \sum_{n \in \N^{d}} \frac{1}{n!} \partial^{n}(a) X^{n}
\]
for an element $a \in R$, where we use the standard notation for multi-index. 
\end{definition}
Then the universal Taylor morphism has following properties. 
\begin{proposition}[Umemura \cite{umemura1} Proposition (1.4)] 
(i) The universal Taylor morphism is a monomorphism. \\
(ii) The universal Taylor morphism is universal among the Taylor morphisms. 
\end{proposition}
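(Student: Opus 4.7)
The plan is to treat the two parts separately; both reduce to the definition after short verifications, and the $\mathbb{Q}$-algebra hypothesis is what ensures the coefficients $1/n!$ are well-defined in the first place.

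For (i), I first need $\iota_R$ to genuinely be a morphism of differential rings. Additivity is immediate from linearity of each $\partial^n$; multiplicativity matches the Cauchy product of $\iota_R(a)$ and $\iota_R(b)$ in the target against the general Leibniz formula $\partial^n(ab)=\sum_{k\le n}\binom{n}{k}\partial^k(a)\,\partial^{n-k}(b)$; and compatibility with each $\partial_i$ is checked by comparing term-by-term the action of $\partial/\partial X_i$ on the defining series with $\iota_R(\partial_i a)$, which reduces to a shift of multi-indices. Injectivity then falls out trivially: the coefficient of $X^0$ in $\iota_R(a)$ is $a$ itself, so $\iota_R(a)=0$ forces $a=0$.

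For (ii), let $\phi\colon R\to S[[X_1,\ldots,X_d]]$ be an arbitrary Taylor morphism. I must produce a unique ring homomorphism $\bar\phi\colon R^\natural\to S$ whose coefficient-wise extension $\bar\phi[[X]]\colon R^\natural[[X]]\to S[[X]]$ satisfies $\bar\phi[[X]]\circ \iota_R=\phi$. The forced candidate is $\bar\phi(a):=\phi(a)|_{X=0}$, the constant term of $\phi(a)$; this is a ring homomorphism because evaluation at the origin is. Uniqueness is automatic, since the $X^0$-coefficient of $\iota_R(a)$ is $a$ and must map to the $X^0$-coefficient of $\phi(a)$.

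The only step that genuinely uses that $\phi$ is a \emph{differential} morphism is the identity $\bar\phi[[X]]\circ \iota_R=\phi$. Here one inducts from compatibility with each $\partial_i$ to obtain $\phi(\partial^n a)=(\partial^n/\partial X^n)\phi(a)$ for every multi-index $n$; evaluating at $X=0$ extracts $n!$ times the $X^n$-coefficient of $\phi(a)$, so dividing by $n!$ and summing over $n$ reproduces the Taylor expansion of $\phi(a)\in S[[X]]$, which of course equals $\phi(a)$ itself. The main obstacle, such as it is, is purely the bookkeeping with multi-indices in the Leibniz and chain-rule verifications; there is no deeper conceptual obstruction, and the $\mathbb{Q}$-structure enters only to legitimize the factors $1/n!$.
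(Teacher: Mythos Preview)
Your argument is correct. The paper itself does not give a proof of this proposition; it simply attributes the result to Umemura \cite{umemura1}, Proposition~(1.4), so there is no in-text proof to compare against. What you have written is the standard verification: part~(i) is immediate from the observation that the constant term of $\iota_R(a)$ is $a$, once one has checked (via the multi-index Leibniz rule) that $\iota_R$ is indeed a differential ring morphism; part~(ii) follows by reading off the constant term of any given Taylor morphism $\phi$ to define $\bar\phi$, and then using the compatibility $\phi\circ\partial_i = (\partial/\partial X_i)\circ\phi$ to recover all higher coefficients. This is exactly the argument one finds in Umemura's original paper, so your proposal fills in the omitted proof in the expected way.
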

%We work in the differential ring $L^{\natural}[[X,\,W]]$. %and we identify the differential ring %$L^{\sharp}[[X]]$ with its image in $L^{\natural}[[X,\,W]]$. 
%Let 
%\[
%\iota_{X} \colon L^{\sharp} \rightarrow L^{\sharp}[[X]] \\
%\]
%be the univerasal Taylor morphism and we identify the differential field $L^{\sharp}$ through the %morphism $\iota_{X}$ with its image in the $L^{\sharp}[[X]]$. In this identification, let 
%\[
%\iota_{W} \colon L^{\sharp} \rightarrow L^{\sharp}[[W_{1},\,W_{2},\,\ddots, W_{n}]] \\
%\]
%be the univerasal Taylor morphism. We identify 
%%%%%%%%%%%%%%%%%%%%%%%%%%%%%%%%%%%%%%%%%%%%%%%%%%%%%%%%%%%
%For a differential ring $(R,\, \partial)$, we will denote the abstract ring $R$ by $R^{\natural}$. 
%%%%%%%%%%%%%%%%%%%%%%%%%%%%%%%%%%%%%%%%%%%%%%%%%%%%%%%%%%%
\subsection{Galois hull $\eL/\K$}
Let $(L,\, \partial_{L})/(K, \, \partial_{K})$ be a differential field extension. 
We assume that the abstract field $L^{\natural}$ is finitely generated over the abstract field $K^{\natural}$. 
We have the universal Taylor morphism 
\begin{equation}\label{1124e}
\iota_{L} \colon L \rightarrow L^{\natural}[[X]]. 
\end{equation}
We choose a mutually commutative basis $\{D_{1},\, D_{2},\, \cdots, \, D_{d}\}$ of the $L^{\natural}$-vector space
$\mathrm{Der}(L^{\natural}/K^{\natural})$ of $K^{\natural}$-derivations of the abstract field $L^{\natural}$. 
We introduce partial differential field $L^{\sharp} := (L^{\natural},\, \{D_{1},\, D_{2},\, \cdots, \, D_{d}\})$. 
Similarly the derivations $\{D_{1},\, D_{2},\, \cdots, \, D_{d}\}$ operate on coefficients of the ring $L^{\natural}$. Then we introduce $\{D_{1},\, D_{2},\, \cdots, \, D_{d}\}$-differential structure on the ring $L^{\natural}[[X]]$. 
So the ring $L^{\natural}[[X]]$ has the differential structure defined by the differentiation $d/dX$ and the set $\{D_{1},\, D_{2},\, \cdots, \, D_{d}\}$ of derivations. 
We denote this differential ring by
\[
L^{\sharp}[[X]] =\left( L^{\natural}[[X]],\, \frac{d}{dx},\,D_{1},\, D_{2},\, \cdots, \, D_{d}\right). 
\]
We replace the target space $L^{\natural}[[X]]$ of universal Taylor morphism \eqref{1124e} by $L^{\sharp}[[X]]$ so that we have
\[
\iota_{L} \colon L \rightarrow L^{\sharp}[[X]]. 
\]
In the definition below, we work in the differential ring $L^{\sharp}[[X]]$. We denote $L^{\sharp}$ by partial differential field of constant power series so that
\[
L^{\sharp} := \left\{ \left. \sum_{i=0}^{\infty} a_{i}X^{i} \in L^{\sharp}[[X]] \; \right| a_{i}= 0 \; \mathrm{for}\; i \geq 1  \right\}. 
\]
Therefore $L^{\sharp}$ is a partial differential sub-field of $L^{\sharp}[[X]]$. 
\begin{definition}
The Galois hull $\eL/\K$ is a partial differential algebra extension in the partial differential ring $L^{\sharp}[[X]]$, 
where $\eL$ is the partial differential sub-algebra generated by the image $\iota(L)$ and $L^{\sharp}$ in $L^{\sharp}[[X]]$. 
And $\K$ is the partial differential sub-algebra generated by the image $\iota(K)$ and $L^{\sharp}$ in $L^{\sharp}[[X]]$ so that \[
\K = \iota(K).L^{\sharp}.
\]
\end{definition}
%%%%%%%%%%%%%%%%%%%%%%%%%%%%%%%%%%%%%%%%%%%%%%%%%%%%%%%%%%%
\subsection{The functor $\mathcal{F}_{L/K}$ of infinitesimal deformations}
For the partial differential field $L^{\sharp}$, we have the universal Taylor morphism
\begin{equation}\label{1124f}
\iota_{L^{\sharp}} \colon L^{\sharp} \rightarrow L^{\natural}[[W_{1},\, W_{2}, \cdots, W_{d}]] = L^{\natural}[[W]], 
\end{equation}
where we denoted the variables $X_{i}$ in \eqref{1124f} by variables $W_{i}$. 
The morphism \eqref{1124f} gives a differential ring morphism
\begin{align}
\left(L^{\sharp}[[X]], \; \left\{ \frac{d}{dX},\, D_{1},\, D_{2},\, \cdots, \, D_{d} \right\}\right) \hspace{15em}
\notag \\
\longrightarrow  \left( L^{\natural}[[W_{1},\, W_{2}, \cdots, W_{d}]][[X]],\; \left\{ \frac{d}{dX},\, \frac{\partial}{\partial W_{1}},\, \frac{\partial}{\partial W_{2}},\, \cdots, \, \frac{\partial}{\partial W_{d}} \right\}\right). \label{1124g}
\end{align}
Restricting the differential morphism \eqref{1124g} to the differential sub-algebra $\eL$, we get a differential morphism
\begin{equation}\label{1124h}
\iota \colon \eL \rightarrow L^{\natural}[[W,\, X]]. 
\end{equation}
Similarly, for an $L^{\natural}$-algebra $A$, we have the partial differential morphism
\begin{equation}\label{1124i}
L^{\natural}[[W,\, X]] \rightarrow A[[W,\, X]]
\end{equation}
induced of the morphism $L^{\natural}\rightarrow A$. We get the differential morphism 
\begin{equation}
\iota \colon \eL \rightarrow A[[W,\, X]]
\end{equation}
by composing \eqref{1124h} and \eqref{1124i}. 
\begin{definition}
We define the infinitesimal deformation functor
\[
\mathcal{F}_{L/K} \colon (Alg/L^{\natural}) \rightarrow (Sets)
\]
from the category $(Alg/L^{\natural})$ of $L^{\natural}$-algebra to the category $(Sets)$ of sets as the set of infinitesimal deformations of the morphism \eqref{1124h}. So
\begin{align*}
\begin{array}{r}
\mathcal{F}_{L/K}(A)=\{ f \colon \eL \rightarrow A[[W,\,X]] \;| \text{$f$ is a partial differential morphism} \qquad\\
\text{congruent to the morphism $\iota$ modulo nilpotent elements}\quad\\
\text{ such that $f|_{\K} = \iota$ } \}.
\end{array}
\end{align*}
\end{definition}
\subsection{Group functor $\infgal(L/K)$ of infinitesimal automorphisms}
\begin{definition}
The Galois group in general Galois Theory is the group functor
\[
\infgal(L/K) \colon (Alg/L^{\natural}) \rightarrow (Grp)
\]
associating to an $L^{\natural}$-algebra $A$ the automorphism group
\[
\begin{array}{r}
\infgal(L/K)(A) = \{ f \colon \eL \hat{\otimes}_{L^{\sharp}} A[[W]] \rightarrow \eL \hat{\otimes}_{L^{\sharp}} A[[W]] \; | \text{$f$ is a differential} \quad\\
\text{$\K \hat{\otimes}_{L^{\sharp}} A[[W]]$-automorphism continuous with respect to the $W$-adic }\;\, \\
\text{topology and congruent to the identity modulo nilpotent elements} \}.
\end{array}
\]
\end{definition}
Then the group functor $\infgal(L/K)$ operates on the functor $\mathcal{F}_{L/K}$. The operation $(\infgal(L/K),\, \mathcal{F}_{L/K})$ is a principal homogeneous space. See Theorem (5.11) \cite{umemura2}. 
%%%%%%%%%%%%%%%%%%%%%%%%%%%%%%%%%%%%%%%%%%%%%%%%%%%%%%%%%%%%%%%%%%%%%%%%%%
%%%%%%%%%%%%%%%%%%%%%%%%%%%%%%%%%%%%%%%%%%%%%%%%%%%%%%%%%%%%%%%%%%%%%%%%%%
\section{Galois group of $L/K$ generated by a linear differential equation over $K$ in general Galois theory}
Let $(K, \, \partial_{K})$ be a differential field of characteristic 0 with an algebraically closed field of constants $C$. 
We consider a differential domain $(S,\, \partial_{S})$ that is an over ring of a differential field $(K, \, \partial_{K})$ satisfying the following conditions. 
\begin{enumerate}
\item There exist $n\times n$ matrices $Z = (z_{ij}) \in GL_n(S)$ and $A \in M_{n}(K)$ such that $Z$ is a solution of a linear differential equation $Y' = AY$. 
\item $S$ is generated as a ring by the entries $z_{ij}$ and the inverse $(\det Z )^{-1}$of the determinant $Z$ over $k$, i.e., $S = k[z_{ij}, (\det Z)^{-1}]_{0\leq i,j \leq n}$. 
\end{enumerate}
We will say the differential domain $(S, \, \partial_{S})$ is generated over $K$ by a system of solutions of the linear differential equation $Y' = AY$ or generated by a linear differential equation over $K$. Similarly we say that the field of fractions $L:=Q(S)$ is generated by a linear differential equation over $K$. 
%Similarly, we replace $K$ by its algebraic closure in $Q(S)$ even if the ring $S$ is generated by a linear differential equation over $K$. 
%%%%%%%%%%%%%%%%%%%%%%%%%%%%%%%%%%%%%%%%%%%%%%%%%%%%%%%%%%%

%Let $R$ be a Picard-Vessiot ring of a linear differential equation 
%\begin{equation}\label{ldeq2}
%Y'= AY, \qquad A \in M_{n}(K), 
%\end{equation}
%and let $S$ be geberated by linear dufferential equation \eqref{ldeq2}. 
A Picard-Vessiot ring is a special case of above ring. 
We know that the Picard-Vessiot ring $R$ over $K$ is a domain and the constants of $R$ is equal to $C$. 
There is no increase of constants, more over this is also true that the field of fractions $Q(R)$ so that the field of constants equals $C$. 
However, it is not always true that the ring of constants of $S$ equals $C$. 
\begin{example}
We work over a differential field $(\bar{\mathbb{Q}}(e^{t}),\, \partial_{t})$. We consider a differential domain 
\[
\bar{\mathbb{Q}}(e^{t})[\pi e^{t},\, (\pi e^{t})^{-1}] = \bar{\mathbb{Q}}(e^{t})[\pi ,\, \pi^{-1}]
\]
with derivation $\partial_{t}$. 
Hence the differential ring $\bar{\mathbb{Q}}(e^{t})[\pi ,\, \pi^{-1}]$ is generated by a linear differential equation $ Y' = Y$ over $\bar{\mathbb{Q}}(e^{t})$. Since 
\[
C_{\bar{\mathbb{Q}}(e^{t})[\pi ,\, \pi^{-1}]} = \bar{\mathbb{Q}}[\pi ,\, \pi^{-1}] \supsetneq \bar{\mathbb{Q}} = C_{\bar{\mathbb{Q}}(e^{t})}, 
\]
the over-ring $\bar{\mathbb{Q}}(e^{t})[\pi ,\, \pi^{-1}]$ has more constant than the base field $\bar{\mathbb{Q}}(e^{t})$.
\end{example}
So we can not treat the differential domain $S$ generated by a linear differential equation over $K$ in Picard-Vessiot theory. 
We will compare the Galois group of generated over $k$ by a system of solutions of the linear differential equation 
\begin{equation}\label{ldeq2}
Y'= AY, \qquad A \in M_{n}(K), 
\end{equation}
and the Galois group of Picard-Vessiot extension for the equation \eqref{ldeq2} in general Galois theory. 
If the ring $S$ is a Picard-Vessiot ring, by Proposition \ref{1202f} (4) in Section \ref{1202g} we get an isomorphism
\[
\mathrm{Lie} (\mathrm{Gal}(Q(S)/K)) \simeq \mathrm{Lie} (\mathrm{Gal}(Q(S)^{G^{o}}/K)). 
\]
As we interested in the Lie algebra of the Galois group, replacing the base field $K$ by its algebraic closure in $Q(S)$. 
%Let $R$ be a Picard-Vessiot ring of a linear differential equation \eqref{ldeq2} and 

Let $S$ be generated by linear differential equation \eqref{ldeq2}. And the field $L = Q(S)$ is the field of fractions of $S$. Then there exists $n\times n$ matrix $Z = (z_{ij}) \in \mathrm{GL}_{n}(L)$ such that
\begin{equation} \label{1124j}
Z' = AZ
\end{equation}
and $L = k(z_{ij})$. 
In the following, we work in the differential ring 
%$L^{\natural}[[W,\,X]]$. and we identify the differential ring 
$L^{\sharp}[[X]]$. %with its image in $L^{\natural}[[W,\,X]]$. 
%%%%%%%%%%%%%%%%%%%%%%%%%%%%%%%%%%%%%%%%%%%%%%%%%%%%%%%%%%%
Since the universal Taylor morphism $\iota$ is a differential morphism, the image of the matrix $Z$ by the $\iota$ satisfies
\begin{equation}\label{1124a}
\frac{d}{dX}(\iota (Z)) = \iota(A)\iota(Z)
\end{equation}
by \eqref{1124j}. 
\begin{lemma}\label{1123a}
If we set $B := \iota(Z) (Z^{\sharp})^{-1} \in \mathrm{GL}_{n}(L^{\sharp}[[X]])$, 
where $Z^{\sharp} = (z_{ij}^{\sharp})$, then the matrix $B$ is in $\mathrm{GL}_{n}(K^{\sharp}[[X]])$.
\end{lemma}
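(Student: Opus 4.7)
The plan is to show that $B$ satisfies a matrix linear ODE in the variable $X$ whose coefficient matrix already has entries in $K^{\natural}[[X]]$, with initial value the identity, and then to solve that ODE by the standard formal power-series recursion to conclude that all Taylor coefficients of $B$ lie in $K^{\natural}$.

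First, I would derive the ODE for $B$. Rewriting the definition as $B \cdot Z^{\sharp} = \iota(Z)$ and applying $d/dX$, while noting that the entries of $Z^{\sharp}$ lie in the subfield $L^{\sharp} \subset L^{\sharp}[[X]]$ of constant power series and are therefore killed by $d/dX$, I obtain
\[
\frac{dB}{dX} \cdot Z^{\sharp} \;=\; \frac{d}{dX}\iota(Z) \;=\; \iota(A)\,\iota(Z) \;=\; \iota(A)\, B\, Z^{\sharp}
\]
by \eqref{1124a}. Since $Z^{\sharp}$ is invertible in $\mathrm{GL}_{n}(L^{\sharp}[[X]])$, this yields $dB/dX = \iota(A)\,B$. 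Evaluating modulo $X$ gives $B(0) = Z \cdot Z^{-1} = I$.

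Second, I would observe that the coefficient matrix $\iota(A)$ already lies in $M_n(K^{\natural}[[X]])$. Indeed, since $A \in M_n(K)$ and the derivation $\partial_{L}$ restricts to $\partial_{K}$ on $K$, the explicit formula of the universal Taylor morphism gives
\[
\iota(A) \;=\; \sum_{m \geq 0} \frac{1}{m!}\, \partial_{K}^{\,m}(A)\, X^{m},
\]
and each coefficient $\partial_K^m(A)$ is a matrix over $K^{\natural}$.

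Finally, I would expand $B = \sum_m B_m X^m$ and $\iota(A) = \sum_m A_m X^m$ and use the recursion $(m+1)B_{m+1} = \sum_{k=0}^{m} A_{m-k} B_k$ forced by $dB/dX = \iota(A)\,B$. Starting from $B_0 = I \in M_n(K^{\natural})$ and knowing $A_m \in M_n(K^{\natural})$ for all $m$, a straightforward induction gives $B_m \in M_n(K^{\natural})$ for every $m$, whence $B \in \mathrm{GL}_n(K^{\natural}[[X]]) = \mathrm{GL}_n(K^{\sharp}[[X]])$. I do not expect a real obstacle: the only mildly delicate point is the identification of $K^{\sharp}[[X]]$ as the subring $K^{\natural}[[X]] \subset L^{\sharp}[[X]]$, which is immediate since the basis $\{D_1,\ldots,D_d\}$ of $\mathrm{Der}(L^{\natural}/K^{\natural})$ vanishes on $K^{\natural}$, so these partial derivations act trivially on $K^{\natural}[[X]]$.
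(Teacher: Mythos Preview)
Your proof is correct and follows essentially the same route as the paper: derive $dB/dX=\iota(A)B$ from \eqref{1124a} using that $Z^{\sharp}$ is constant in $X$, observe $B_0=I$, and conclude by the power-series recursion that all coefficients of $B$ lie in $K^{\natural}=K^{\sharp}$. The only cosmetic difference is that the paper expands with the $1/k!$ normalization, yielding $B_{k}=\sum_{l+m=k-1}\frac{(k-1)!}{l!m!}A_{l}B_{m}$ instead of your $(m+1)B_{m+1}=\sum_{k}A_{m-k}B_{k}$; your extra remark explaining why $K^{\sharp}[[X]]=K^{\natural}[[X]]$ is a welcome clarification that the paper leaves implicit.
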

\begin{proof}
We write 
\[
\iota(A) = \sum \frac{1}{k!}A_{k}X^{k},\, A_{k} \in \mathrm{M}_{n}(K^{\sharp})
\]
and 
\[
 B = \sum \frac{1}{k!}B_{k}X^{k},\, B_{k} \in \mathrm{M}_{n}(L^{\sharp}).
\] 
It is sufficient to show that $B_{k}$ is in $\mathrm{M}_{n}(K^{\sharp})$ for $k \in \N$. We show this by induction on $k$. 
For $k=0$, indeed $B_{0} = I_{n} \in \mathrm{M_{n}}(K^{\sharp})$ by definition of $B$. Assume $B_{l} \in \mathrm{M_{n}}(K^{\sharp})$ for $l < k$. Since $Z^{\sharp}$ is constant matrix with respect to $d/dX$, 
it is follows from \eqref{1124a} 
\begin{equation}\label{1124b}
\frac{d}{dx} B = \iota (A)B. 
\end{equation}
%We denote $\iota(A)$ by $\sum \frac{1}{k!}A_{k}X^{k}$, 
We rewrite \eqref{1124b}
\begin{equation}\label{1124c}
\dfrac{d}{dx}\left( \sum \frac{1}{k!}B_{k}X^{k}\right) = \left( \sum \frac{1}{k!}A_{k}X^{k}\right)\left( \sum \frac{1}{k!}B_{k}X^{k}\right).
\end{equation}
Comparing coefficients of $X^{k-1}$ of \eqref{1124c}, we get 
\[
B_{k} = \sum_{l+m=k-1}\frac{(k-1)!}{l!m!}A_{l}B_{m} \in \mathrm{M_{n}}(K^{\sharp}). 
\]
\end{proof}
%%%%%%%%%%%%%%%%%%%%%%
In the construction of Galois hull $\eL$ in general differential Galois theory, we consider a differential field extension $L/K$. 
However, we replace the differential field $L$ by the differential ring $S = k[Z,\, (\det Z)^{-1}]$. 
We consider the restriction of the universal Taylor morphism $\iota$ to the differential sub-algebra $S = K[Z,\, (\det Z)^{-1}]$ of $L$. 
And we replace the Galois hull $\eL$ by the sub-ring $\eS:= \iota(S).L^{\sharp}$ of $L^{\sharp}[[X]]$. 
%%%%%%%%%%%%%%%%%%%%%%%

\begin{lemma}\label{1125a} In the differential ring $L^{\sharp}[[X]]$, 
the differential sub-ring 
\[
\iota(K)[B, (\det B)^{-1}].L^{\sharp}
\]
coincides with the differential sub-ring
\[\eS = \iota(K[Z,\, (\det Z)^{-1}]).L^{\sharp}. \]
\end{lemma}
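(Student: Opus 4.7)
The plan is to prove the asserted equality by mutual inclusion, exploiting the defining identity $\iota(Z) = B \cdot Z^{\sharp}$, which is merely a rearrangement of $B := \iota(Z)(Z^{\sharp})^{-1}$.

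First I would note the background fact that makes everything work: since $Z \in \mathrm{GL}_n(L)$, its image $Z^{\sharp}$ lies in $\mathrm{GL}_n(L^{\sharp})$ (as $L^{\sharp}$ has the same underlying set as $L^{\natural}$), so all entries of $Z^{\sharp}$, of $(Z^{\sharp})^{-1}$, and the element $(\det Z^{\sharp})^{-1}$ all belong to $L^{\sharp}$. In particular $Z^{\sharp}, (Z^{\sharp})^{-1} \in \mathrm{M}_n(L^{\sharp})$ and $\det Z^{\sharp} \in (L^{\sharp})^{\times}$.

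For the inclusion $\iota(K)[B,(\det B)^{-1}].L^{\sharp} \subseteq \eS$, I would argue that $\iota(K) \subseteq \iota(S) \subseteq \eS$ and $L^{\sharp} \subseteq \eS$ by the definition of $\eS$. Then from $B = \iota(Z)(Z^{\sharp})^{-1}$, each entry of $B$ is an $L^{\sharp}$-linear combination of entries of $\iota(Z) \subseteq \iota(S)$, so the entries of $B$ lie in $\iota(S) \cdot L^{\sharp} \subseteq \eS$. For the inverse of the determinant, taking determinants gives $\det B = \iota(\det Z) \cdot (\det Z^{\sharp})^{-1}$, hence
\[
(\det B)^{-1} = \iota\!\left((\det Z)^{-1}\right) \cdot \det Z^{\sharp} \in \iota(S)\cdot L^{\sharp} \subseteq \eS.
\]

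For the reverse inclusion $\eS \subseteq \iota(K)[B,(\det B)^{-1}].L^{\sharp}$, I would show the generators of $\eS$ lie on the right. Trivially $\iota(K)$ and $L^{\sharp}$ do. Using $\iota(Z) = B \cdot Z^{\sharp}$ and the fact that $Z^{\sharp} \in \mathrm{M}_n(L^{\sharp})$, each entry of $\iota(Z)$ lies in $\iota(K)[B] \cdot L^{\sharp}$. Finally, $\iota((\det Z)^{-1}) = (\det B)^{-1} \cdot \det Z^{\sharp}$ lies in $\iota(K)[(\det B)^{-1}]\cdot L^{\sharp}$, since $\det Z^{\sharp} \in L^{\sharp}$. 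Thus all ring generators of $\iota(K[Z,(\det Z)^{-1}]).L^{\sharp}$ are accounted for.

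The main obstacle is essentially notational bookkeeping rather than conceptual: one must keep straight that $Z^{\sharp}$ — the matrix $Z$ viewed as an element of $L^{\sharp}[[X]]$ concentrated in degree $0$ — lies in $\mathrm{GL}_n(L^{\sharp})$, so that its determinant and matrix inverse can be freely absorbed into the factor $L^{\sharp}$ on either side of the claimed equality. Once that is granted, both containments reduce to the single algebraic identity $\iota(Z) = B \cdot Z^{\sharp}$ together with its determinantal consequence.
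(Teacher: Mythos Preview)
Your proposal is correct and follows essentially the same approach as the paper: both arguments rest entirely on the identity $\iota(Z)=B\,Z^{\sharp}$ together with the observation that $Z^{\sharp}\in\mathrm{GL}_{n}(L^{\sharp})$, so that $Z^{\sharp}$, $(Z^{\sharp})^{-1}$ and $(\det Z^{\sharp})^{\pm 1}$ can be absorbed into the factor $L^{\sharp}$. The paper simply records this as a single chain of equalities
\[
\iota(K)[\iota(Z),\,(\det\iota(Z))^{-1}].L^{\sharp}=\iota(K)[BZ^{\sharp},\,(\det BZ^{\sharp})^{-1}].L^{\sharp}=\iota(K)[B,\,(\det B)^{-1}].L^{\sharp},
\]
whereas you spell out the two inclusions separately; the content is the same.
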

\begin{proof}
From $B = \iota(Z) (Z^{\sharp})^{-1}$, 
\[
\begin{array}{r}
\eS = \iota(K[Z,\, (\det Z)^{-1}]).L^{\sharp} = \iota(K)[\iota(Z),\, (\det \iota(Z))^{-1}]).L^{\sharp} \hspace{5em}\\%\qquad \quad\\
= \iota(K)[BZ^{\sharp},\, (\det BZ^{\sharp})^{-1}]).L^{\sharp} = \iota(K)[B,\, (\det B)^{^1}].L^{\sharp}
\end{array}
\]
\end{proof}
\begin{lemma}\label{1201d}
The sub-ring $\eS$ of $L^{\sharp}[[X]]$ is a differential sub-ring. 
\end{lemma}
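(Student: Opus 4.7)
The plan is to verify that $\eS$ is stable under each derivation of the partial differential structure on $L^{\sharp}[[X]]$, namely $d/dX$ and $D_1,\ldots,D_d$. By Lemma \ref{1125a} we have $\eS = \iota(K)[B,(\det B)^{-1}]\cdot L^{\sharp}$, so by the Leibniz rule it suffices to check closure on the generators $\iota(K)$, $B$, $(\det B)^{-1}$, and $L^{\sharp}$.

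For $d/dX$: elements of $L^{\sharp}$ are constant power series and so are killed. Because $\iota$ intertwines $\partial_K$ with $d/dX$, we have $(d/dX)(\iota(a)) = \iota(\partial_K a) \in \iota(K)$ for $a \in K$. The relation $dB/dX = \iota(A)\,B$ established in equation \eqref{1124b} shows that the entries of $dB/dX$ lie in $\iota(K)[B] \subset \eS$; the Jacobi formula then yields $d(\det B)/dX = \mathrm{tr}(\iota(A))\cdot \det B$, whence $d((\det B)^{-1})/dX = -\mathrm{tr}(\iota(A))\cdot (\det B)^{-1} \in \eS$.

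For each $D_i$: by construction the $D_i$ form a basis of $\mathrm{Der}(L^{\natural}/K^{\natural})$ and act coefficient-wise on $L^{\natural}[[X]]$, so they preserve $L^{\sharp}$. The crucial point is that they annihilate both $\iota(K)$ and $B$. For $a \in K$ the series $\iota(a) = \sum (1/n!)\partial_K^n(a)\,X^n$ has coefficients in $K^{\natural}$, which every $K^{\natural}$-derivation $D_i$ kills. By Lemma \ref{1123a}, $B \in \mathrm{GL}_n(K^{\sharp}[[X]])$, so the entries of $B$ (and hence of $(\det B)^{-1}$) likewise lie in $K^{\natural}[[X]]$ and are annihilated by each $D_i$.

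Combining these two checks gives $\partial(\eS) \subset \eS$ for every derivation $\partial$ in the partial differential structure, so $\eS$ is a differential sub-ring. The only non-routine ingredient is Lemma \ref{1123a}: its conclusion that $B$ has coefficients in $K^{\natural}$ is precisely what makes the $D_i$-closure transparent, and this is the step I would otherwise have expected to be the main obstacle, since a priori the entries of $B = \iota(Z)(Z^{\sharp})^{-1}$ involve the (possibly $D_i$-nontrivial) elements $z_{ij}^{\sharp} \in L^{\sharp}$.
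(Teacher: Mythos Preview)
Your proof is correct and follows essentially the same approach as the paper: split into closure under $d/dX$ and under the $D_i$, and for the latter invoke Lemma~\ref{1123a} (via the description $\eS=\iota(K)[B,(\det B)^{-1}]\cdot L^{\sharp}$ of Lemma~\ref{1125a}) to see that everything except $L^{\sharp}$ lies in $K^{\sharp}[[X]]$ and is thus killed by the $D_i$. The only cosmetic difference is that for $d/dX$-closure the paper uses the original presentation $\eS=\iota(S)\cdot L^{\sharp}$ and simply notes that $\iota(S)$ is $d/dX$-closed because $\iota$ is a differential morphism, whereas you work with the $B$-generators and the relation $dB/dX=\iota(A)B$; both arguments are fine.
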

\begin{proof}
We show that the ring $\eS$ is closed under the derivations $d/dX$ and $D_{i}$ for $1 \leq i \leq d$. 
Since both $\iota(K[Z,\, (\det Z)^{-1}])$ and $L^{\sharp}$ are closed under the differentiation $d/dX$, the ring $\eS$ is closed under the differentiation. 
To show that the ring $\eS$ closed under the derivations $D_{i}$, by Lemma \ref{1125a}, we will show the ring $\iota(K)[B, (\det B)^{-1}].L^{\sharp}$ is closed under the derivations. 
The sub-ring $L^{\sharp}$ is closed obviously. 
By Lemma \ref{1123a}, the ring $\iota(K)[B, (\det B)^{-1}]$ is in the ring $K^{\sharp}[[X]]$ so that derivations $D_{i}$ act trivially on $\iota(K)[B, (\det B)^{-1}]$. 
Then $\iota(K)[B, (\det B)^{-1}].L^{\sharp}$ closed under the derivations $D_{i}$. 
\end{proof}
%%%%%%%%%%%%%%%%%%%%%%%%%%%%%%%%%%%%%%%%%%%%%%%%%%%%%%%%%
From Lemma \ref{1123a} we get $\iota(K)[B,\, (\det B)^{-1}] \subset K^{\sharp}[[X]]$. % and $Q(\iota(K)[B,\, (\det B)^{-1}]) \subset K^{\sharp}[[X]][X^{-1}]$ where the field $Q(\iota(K)[B,\, (\det B)^{-1}])$ is the field of fractions of  $\iota(K)[B,\, (\det B)^{-1}] $. 
So we have $C \subset C_{\iota(K)[B,\, (\det B)^{-1}] } \subset K^{\sharp}$. 
%In general, the ring of constants of $\iota(K)[B,\, (\det B)^{-1}]$ is larger than the field $C$. 
%%%%%%%%%%%%%%%%%%%%%
\begin{example}
We consider a differential ring $\mathbb{C}(x)[\exp x,\, (\exp x)^{-1}]$. 
The ring $\mathbb{C}(x)[\exp,\,(\exp x)^{-1}]$ is a Picard-Vessiot ring over $\mathbb{C}(x)$ for a linear differential equation $Y'=Y$ with the fundamental matrix $Z = \exp x$. 
The image of fundamental matrix $\iota(Z)$ is
\[
\iota(Z) = \exp x + (\exp x )X + \frac{1}{2!}(\exp x )X^{2} + \cdots = \exp(x+X)
\]
Then the matrix $B$ is 
\[
B = \iota(Z) (Z^{\sharp})^{-1} = (\exp (x+X))(\exp x)^{-1} = \exp(X)
\]
So \[
\iota(K)^{\sharp}[B,\, (\det B)^{-1}] = \iota(\mathbb{C}(x))[\exp X,\, (\exp X)^{-1}] 
\]
In this case $C_{\iota(\mathbb{C}(x))[\exp X,\, (\exp X)^{-1}]} = \mathbb{C}$. 
\end{example}
%%%%%%%%%%%%%%%%%%%%%
\begin{example}
%We work over the differential field $(\mathbb{C}(x), \, \frac{d}{dx})$
We consider a differential ring $\mathbb{C}(x)[\log x]$. 
The ring $\mathbb{C}(x)[\log x]$ is a Picard-Vessiot ring over $\mathbb{C}(x)$ for a linear differential equation 
\[
Y' = 
\begin{pmatrix}
0 & 1\\
0 & -\frac{1}{x}
\end{pmatrix}Y
\] with the fundamental matrix \[
Z =
\begin{pmatrix}
\log x & 1\\
\frac{1}{x} & 0
\end{pmatrix}. 
\]
The image of fundamental matrix $\iota(Z)$ is
\[
\iota(Z) = 
\begin{pmatrix}
\log x + \frac{1}{x}X -\frac{1}{2x}X^{2} + \cdots  & 1\\
\frac{1}{x} - \frac{1}{x^{2}}X + \frac{1}{x^{3}}X^{2} + \cdots & 0
\end{pmatrix} = \begin{pmatrix}
\log (x + X) & 1\\
\frac{1}{x+X} & 0
\end{pmatrix}. 
\] 
So the matrix $B$ is 
\[
B = \iota(Z)(Z^{\sharp})^{-1} = \begin{pmatrix}
\log (x + X) & 1\\
\frac{1}{x+X} & 0
\end{pmatrix} \begin{pmatrix}
0& x\\
1 & -x\log x
\end{pmatrix} = \begin{pmatrix}
1 & x(\log (1+\frac{X}{x})\\
0 & \frac{x}{x+X}
\end{pmatrix}. 
\] 
Then we get,
\[
x = \left(\frac{x}{x+X} \right) (x+X) \in \iota(\mathbb{C}(x))[B,\, (\det B)^{-1}]
\]
Since $x$ is a constant with respect to derivations $d/dX$ and $D_{i}$, the ring of constants
 $\mathbb{C}(x)^{\sharp}[B, (\det B)^{-1}]$ is larger than the field $\mathbb{C}$. 
%In this case, $C_\mathbb{C}(x)[B, (\det B)^{-1}]= \mathbb{C}(x)$
\end{example}
Then we consider the sub-ring $\iota(K)[B,\, (\det B)^{-1}].K^{\sharp}$ of $\eS$. The sub-ring is also a differential sub-ring. 
%%%%%%%%%%%%%%%%%%%%%%%
The following lemma is a famous result called linear disjointness theorem. 
\begin{lemma} [Kolchin]\label{1126a}
Let $(R,\, \partial)$ be a differential ring and $M$ be a differential sub-field of $R$. 
Then the field $M$ and the ring of constants $C_{R}$ of $R$ are linearly disjoint over the field of constants $C_{M}$ of $M$. 
\end{lemma}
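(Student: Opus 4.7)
The plan is to run the classical Wronskian-style argument for linear disjointness of constants: take a minimal counterexample and differentiate to produce a strictly shorter relation, forcing a contradiction. Concretely, I would assume for contradiction that there exist elements $c_{1},\dots,c_{n}\in C_{R}$ which are linearly independent over $C_{M}$ but become linearly dependent over $M$, and I would choose such a configuration with $n$ minimal. By minimality, every $c_{i}$ appears with a nonzero coefficient in the dependence relation $\sum_{i=1}^{n}m_{i}c_{i}=0$, so I may normalize by dividing through by $m_{n}\in M^{\times}$ and assume $m_{n}=1$.

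Next I would apply the derivation $\partial$ to this relation. Because $c_{i}\in C_{R}$, we have $\partial(c_{i})=0$, so Leibniz gives $\sum_{i=1}^{n}m_{i}'c_{i}=0$. The crucial point is that the coefficient of $c_{n}$ in this new relation is $m_{n}'=\partial(1)=0$, so the relation actually has at most $n-1$ terms. By the minimality of $n$, this forces $m_{i}'=0$ for every $i<n$, i.e.\ each $m_{i}$ lies in $C_{M}$. But then the original relation $\sum_{i=1}^{n}m_{i}c_{i}=0$, now with all $m_{i}\in C_{M}$ (and $m_{n}=1\neq 0$), contradicts the assumed $C_{M}$-linear independence of $c_{1},\dots,c_{n}$.

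I expect no real obstacle here; the only subtlety is the bookkeeping that ensures the differentiated relation is genuinely shorter, which is handled by the normalization $m_{n}=1$. The statement of linear disjointness is then the standard reformulation: any finite $C_{M}$-linearly independent subset of $C_{R}$ remains linearly independent over $M$, which is exactly what the above contradiction establishes.
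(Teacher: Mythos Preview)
Your argument is correct; this is exactly the classical Wronskian minimal-counterexample proof of Kolchin's linear disjointness lemma. The paper itself does not give a proof at all but merely cites \cite{umemura1}, Lemma~(1.1), so your write-up in fact supplies what the paper omits, and is almost certainly the same argument one would find in the cited reference.
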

\begin{proof}
See \cite{umemura1} Lemma (1.1)
\end{proof}
The lemma above as well as the lemma below are quite useful. 
\begin{lemma} \label{1201a}
Let $M$ be a field and $(M[[X]],\, d/dX)$ be the differential ring of power series with coefficients in $M$. Let $R$ be a differential sub-ring of $M[[X]]$ containing the field $M$. 
Then the ring $R$ is a domain and the field of fractions $Q(R)$ has a differential field structure and we have
\[
C_{Q(R)} = M. 
\]
\end{lemma}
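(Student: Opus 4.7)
The plan is to exploit the fact that $R$ sits inside $M[[X]]$, a well-understood differential domain whose field of fractions is the Laurent series field $M((X))$ on which the constants of $d/dX$ are visibly equal to $M$. The lemma then reduces to a transport-of-structure argument.

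First, I would observe that $M[[X]]$ is an integral domain (since $M$ is a field), so the subring $R$ is automatically a domain; hence $Q(R)$ exists. The derivation $d/dX$ on $M[[X]]$ restricts to a derivation on $R$ by hypothesis, and this restricted derivation extends uniquely to a derivation $\partial$ on $Q(R)$ by the usual quotient rule $\partial(f/g) = (f'g - fg')/g^2$, making $Q(R)$ into a differential field.

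Next, I would use the canonical inclusion $Q(R) \hookrightarrow Q(M[[X]]) = M((X))$. Both $Q(R)$ (with the derivation just constructed) and $M((X))$ (with the canonical extension of $d/dX$) are differential field extensions of $(R, d/dX)$. By uniqueness of the extension of a derivation to the field of fractions of a domain, the derivation on $Q(R)$ coincides with the restriction of the derivation on $M((X))$. In particular, $C_{Q(R)} \subset C_{M((X))} \cap Q(R)$.

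It then remains to verify $C_{M((X))} = M$, which is a direct power-series computation: writing an element as $\alpha = \sum_{n \geq n_0} a_n X^n$ with $\alpha' = \sum_{n \geq n_0} n a_n X^{n-1} = 0$ forces $n a_n = 0$ for all $n$, hence $a_n = 0$ for $n \neq 0$, so $\alpha = a_0 \in M$. Since $M \subset R \subset Q(R)$ and $M$ obviously consists of constants, combining this with the previous step yields $M \subset C_{Q(R)} \subset M \cap Q(R) = M$, proving the equality. I do not anticipate a serious obstacle; the only subtlety worth being explicit about is the compatibility of the two possible derivations on $Q(R)$, which is handled by the uniqueness of the extension from $R$ to its field of fractions.
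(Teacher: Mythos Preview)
Your proof is correct and follows essentially the same approach as the paper: embed $Q(R)$ into $M((X))$ and use the chain of inclusions $M \subset C_{Q(R)} \subset C_{M((X))} = M$. The paper's version is terser, taking for granted the points you spell out about the differential structure on $Q(R)$ and the computation of $C_{M((X))}$.
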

\begin{proof}
Since $M[[X]]$ is a domain, it is clear that the sub-ring $R$ is a domain. The field of fractions $Q(R)$ is a sub-field of $Q(M[[X]])=M[[X]][X^{-1}]$ and contains $M$. So,
\[
M \subset C_{Q(R)} \subset C_{M[[X]][X^{-1}]} = M. 
\]
\end{proof}
\begin{remark}
Lemma \ref{1126a} and Lemma \ref{1201a} are also true if rings are partial differential ring. 
\end{remark}
%%%%%%%%%%%%%%%%%%%%%%%%%%%%%%%%
Applying Lemma \ref{1201a} to $\iota(K)[B,\, (\det B)^{-1}].K^{\sharp} \subset K^{\sharp}[[X]]$ and $\eS \subset L^{\sharp}[[X]]$, we have following corollaries. 
\begin{cor}\label{1201b}
The field of constants $C_{Q(\iota(K)[B,\, (\det B)^{-1}].K^{\sharp})}$ 
of the field of fractions $Q(\iota(K)[B,\, (\det B)^{-1}].K^{\sharp})$ of $\iota(K)[B,\, (\det B)^{-1}].K^{\sharp}$ is $K^{\sharp}$
\end{cor}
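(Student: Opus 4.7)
The plan is to deduce this corollary as a direct specialization of Lemma \ref{1201a} (in its partial-differential version, as allowed by the preceding remark) with $M = K^{\sharp}$ and $R = \iota(K)[B,(\det B)^{-1}].K^{\sharp}$, viewed now as a subring of $K^{\sharp}[[X]]$ rather than of the larger $L^{\sharp}[[X]]$. The whole content of the corollary is really a question of identifying in which ambient power series ring we are working.

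First I would invoke Lemma \ref{1123a}: we already know $B \in \mathrm{GL}_{n}(K^{\sharp}[[X]])$, and of course $\iota(K) \subset K^{\sharp}[[X]]$ and $K^{\sharp} \subset K^{\sharp}[[X]]$ (as constant power series). Hence $R = \iota(K)[B,(\det B)^{-1}].K^{\sharp}$ sits inside $K^{\sharp}[[X]]$, and it is closed under $d/dX$ (by the defining relation $\tfrac{d}{dX}B = \iota(A)B$ established in Lemma \ref{1123a} and the fact that $K^{\sharp}$ consists of $d/dX$-constants). In particular, $R$ is a differential subring of the ordinary differential ring $(K^{\sharp}[[X]], d/dX)$ containing the field $K^{\sharp}$.

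Next I would address the other derivations $D_{i}$. By construction, $D_{1},\dots,D_{d}$ form a basis of $\mathrm{Der}(L^{\natural}/K^{\natural})$, so they vanish identically on $K^{\natural} = K^{\sharp}$. Extending them coefficient-wise to power series, they then vanish on the whole of $K^{\sharp}[[X]]$, and therefore on $R$ and on $Q(R)$ (using the Leibniz rule for $D_{i}$ on quotients). Consequently, computing the partial-differential ring of constants of $Q(R)$ reduces to computing its ring of constants with respect to $d/dX$ alone.

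Finally, applying Lemma \ref{1201a} with $M = K^{\sharp}$ to the differential subring $R \subset K^{\sharp}[[X]]$ yields immediately $C_{Q(R)} = K^{\sharp}$. Combined with the previous paragraph, this is exactly the claim. There is no real obstacle: the only step that is not pure formalism is recognizing that Lemma \ref{1123a} already places us inside $K^{\sharp}[[X]]$ (rather than the bigger $L^{\sharp}[[X]]$ where a priori one sees only the inclusion $C \subset C_{R}$), and checking that the $D_{i}$'s bring nothing new because they act trivially on coefficients in $K^{\sharp}$.
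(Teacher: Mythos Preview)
Your proof is correct and follows essentially the same approach as the paper: the paper simply states ``Applying Lemma~\ref{1201a} to $\iota(K)[B,\,(\det B)^{-1}].K^{\sharp}\subset K^{\sharp}[[X]]$'' (this inclusion having already been noted from Lemma~\ref{1123a}), which is exactly your argument with $M=K^{\sharp}$. Your extra paragraph verifying that the $D_i$ act trivially on $K^{\sharp}[[X]]$ is a welcome clarification, but the paper absorbs this into the Remark that Lemma~\ref{1201a} holds in the partial-differential setting, so no substantive difference remains.
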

%\begin{proof}
%To apply Lemma \ref{1201a} to $\iota(K)[B,\, (\det B)^{-1}].K^{\sharp} \subset K^{\sharp}[[X]]$. 
%\end{proof}
\begin{cor}\label{1201c}
The field of constants of the differential field $(Q(\eS),\, d/dX)$ is $L^{\sharp}$. 
\end{cor}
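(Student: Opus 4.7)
The plan is to obtain Corollary \ref{1201c} as a direct instance of Lemma \ref{1201a}, viewing $\eS$ as a subring of $L^{\sharp}[[X]]$ equipped only with the single derivation $d/dX$ (forgetting the $D_{i}$'s for the purposes of the application). With $M := L^{\sharp}$ and $R := \eS$, the conclusion of Lemma \ref{1201a} reads exactly $C_{Q(\eS)} = L^{\sharp}$, so it suffices to check the three hypotheses of that lemma.

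First, I would note that $L^{\sharp}$, as a field of constant power series inside $L^{\sharp}[[X]]$, is a field (its underlying abstract field being $L^{\natural}$). Second, $\eS$ is closed under $d/dX$: this is part of Lemma \ref{1201d}, which shows that $\eS$ is closed under every derivation in $\{d/dX, D_{1}, \dots, D_{d}\}$. So in particular $\eS$ is a $d/dX$-differential subring of $L^{\sharp}[[X]]$. Third, $\eS$ contains $L^{\sharp}$ by its very definition: writing $\eS = \iota(K[Z,(\det Z)^{-1}]).L^{\sharp}$, one sees that the $L^{\sharp}$-factor is built into the subring.

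With these three hypotheses verified, Lemma \ref{1201a} applies and immediately yields
\[
C_{Q(\eS)} = L^{\sharp},
\]
which is the content of the corollary.

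There is essentially no obstacle in this step; the substantive work was already carried out earlier, in Lemma \ref{1201d} (closure of $\eS$ under all the differentiations) and in Lemma \ref{1201a} itself (the general statement that constants of a differential subring of $M[[X]]$ containing $M$ equal $M$). The only point worth flagging is conceptual rather than technical: one must remember that in this corollary ``constants'' refers only to the derivation $d/dX$ on $Q(\eS)$, not to the full partial differential structure involving $D_{1}, \dots, D_{d}$; the corresponding statement about the partial structure is Corollary \ref{1201b}, proved in the parallel way with $K^{\sharp}$ in place of $L^{\sharp}$.
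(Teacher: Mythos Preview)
Your proof is correct and follows exactly the paper's approach: the paper states that Corollaries \ref{1201b} and \ref{1201c} are obtained by applying Lemma \ref{1201a} to the inclusions $\iota(K)[B,(\det B)^{-1}].K^{\sharp}\subset K^{\sharp}[[X]]$ and $\eS\subset L^{\sharp}[[X]]$ respectively, and you have spelled out precisely the verification of the hypotheses in the second case. One small remark: in your final sentence you describe Corollary \ref{1201b} as the ``corresponding statement about the partial structure,'' but in fact it is also a statement about $d/dX$-constants, just for the smaller ring $\iota(K)[B,(\det B)^{-1}].K^{\sharp}$ sitting inside $K^{\sharp}[[X]]$; this does not affect your argument.
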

%%%%%%%%%%%%%%%%%%%%%%%%%%%%%%%%
\begin{lemma}\label{1202a}
The sub-ring $\iota(K)[B,\, (\det B)^{-1}].K^{\sharp}$ of $(\eS,\, d/dX)$ and the sub-field
 $L^{\sharp}$ are linearly disjoint over $K^{\sharp}$. 
So we have a $d/dX$-differential isomorphism 
\[
\iota(K)[B,\, (\det B)^{-1}].K^{\sharp} \otimes_{K^{\sharp}} L^{\sharp} \simeq \eS
\]
\end{lemma}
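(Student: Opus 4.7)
The plan is to apply Kolchin's linear disjointness theorem (Lemma \ref{1126a}) to the ambient $d/dX$-differential field $Q(\eS)$. I would first set $R_{0} := \iota(K)[B,\, (\det B)^{-1}].K^{\sharp}$; by Lemma \ref{1123a} this sits inside $K^{\sharp}[[X]]$, and by Lemma \ref{1125a} it sits inside $\eS \subset L^{\sharp}[[X]]$. In particular $R_{0}$ is a $d/dX$-differential sub-ring of $\eS$, so $Q(R_{0})$ is a differential sub-field of $Q(\eS)$.

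Next I would invoke the constants computations: by Corollary \ref{1201b}, the constants of $Q(R_{0})$ are exactly $K^{\sharp}$; by Corollary \ref{1201c}, the constants of $Q(\eS)$ are exactly $L^{\sharp}$. Kolchin's lemma then yields that $Q(R_{0})$ and $L^{\sharp}$ are linearly disjoint over $K^{\sharp}$ inside $Q(\eS)$, and restricting to $R_{0} \subset Q(R_{0})$ gives linear disjointness of $R_{0}$ and $L^{\sharp}$ over $K^{\sharp}$. Equivalently, the multiplication map
\[
\mu\colon R_{0} \otimes_{K^{\sharp}} L^{\sharp} \longrightarrow L^{\sharp}[[X]], \qquad r \otimes l \mapsto r \cdot l,
\]
is injective.

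To finish, I would identify the image of $\mu$ with $\eS$: using $K^{\sharp} \subset L^{\sharp}$ together with Lemma \ref{1125a},
\[
R_{0}.L^{\sharp} = \iota(K)[B,\, (\det B)^{-1}].K^{\sharp}.L^{\sharp} = \iota(K)[B,\, (\det B)^{-1}].L^{\sharp} = \eS,
\]
so $\mu$ is surjective onto $\eS$. Since elements of $L^{\sharp}$ are $d/dX$-constants in $L^{\sharp}[[X]]$, the natural derivation on the tensor product (acting as $d/dX$ on the first factor and trivially on the second) is well-defined, and $\mu$ transports it to the ambient $d/dX$ on $\eS$, making $\mu$ a $d/dX$-differential isomorphism.

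The main obstacle, and really the only substantive input, is verifying the hypotheses of Kolchin's lemma, namely the correct determination of the constants of $Q(R_{0})$ and $Q(\eS)$. That work was already done in Corollaries \ref{1201b} and \ref{1201c}, resting ultimately on the key observation of Lemma \ref{1123a} that $B$ has entries in $K^{\sharp}[[X]]$. Once those identifications are in hand, the rest of the argument unfolds formally.
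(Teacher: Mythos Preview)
Your proposal is correct and follows essentially the same route as the paper: apply Kolchin's linear disjointness lemma (Lemma~\ref{1126a}) to the differential subfield $Q(\iota(K)[B,(\det B)^{-1}].K^{\sharp})$ of $Q(\eS)$, use Corollaries~\ref{1201b} and~\ref{1201c} to identify the relevant constant fields as $K^{\sharp}$ and $L^{\sharp}$, and then pass back to the subring. Your write-up is in fact more explicit than the paper's, spelling out the surjectivity via Lemma~\ref{1125a} and the compatibility of the derivation on the tensor product, both of which the paper leaves implicit.
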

\begin{proof}
We work in the differential field $Q(\eS)$. To apply Lemma \ref{1126a} to the differential sub-field \[Q(\iota(K)[B,\, (\det B)^{-1}].K^{\sharp})\]
of $Q(\eS)$, the field $Q(\iota(K)[B,\, (\det B)^{-1}].K^{\sharp})$ and $C_{Q(\eS)}$ are linearly disjoint over $C_{Q(\iota(K)[B,\, (\det B)^{-1}].K^{\sharp})}$. 
So the differential sub-ring $\iota(K)[B,\, (\det B)^{-1}].K^{\sharp}$ and $C_{Q(\eS)}$ also linearly disjoint over $C_{Q(\iota(K)[B,\, (\det B)^{-1}].K^{\sharp})}$. 
Now Lemma follows from Corollary \ref{1201b} and Corollary \ref{1201c}. 
% we have $C_{Q(\eS)}= L^{\sharp}$ and .
\end{proof}
From now on, we work in the partial differential ring $L^{\natural}[[W,\, X]]$ and 
identify a sub-ring $R$ of $L^{\sharp}[[X]]$ with its image of the universal Taylor morphism $\iota_{L^{\sharp}}$. 
\begin{proposition}\label{1203d}
In the partial differential ring 
\[
\left( L^{\natural}[[W_{1},\, W_{2}, \cdots, W_{d}]][[X]],\; \left\{ \frac{d}{dX},\, \frac{\partial}{\partial W_{1}},\, \frac{\partial}{\partial W_{2}},\, \cdots, \, \frac{\partial}{\partial W_{d}} \right\}\right),
\]
The sub-ring $\eS.L^{\natural}$ is a partial differential sub-ring. 
So we have a partial differential isomorphism 
\[
\eS.L^{\natural} \simeq  (\iota(K)[B,\, (\det B)^{-1}].K^{\sharp} \otimes_{K^{\sharp}} L^{\sharp}) \otimes_{K^{\natural}} L^{\natural}. 
\]
\end{proposition}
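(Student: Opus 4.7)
The plan has two stages: first, verify that $\eS \cdot L^{\natural}$ is closed under all partial derivations; second, establish the displayed tensor-product decomposition by combining Lemma~\ref{1202a} with two applications of Kolchin's linear disjointness (Lemma~\ref{1126a}) in the partial-differential setting.

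For the first stage, the morphism \eqref{1124g} extending $\iota_{L^{\sharp}}$ is a partial-differential morphism that intertwines $d/dX$ and sends each $D_i$ to $\partial/\partial W_i$. By Lemma~\ref{1201d}, $\eS$ is closed under $d/dX$ and all $D_i$ inside $L^{\sharp}[[X]]$, so after the identification its image in $L^{\natural}[[W, X]]$ is closed under $d/dX$ and each $\partial/\partial W_i$. Since $L^{\natural}$ sits in $L^{\natural}[[W, X]]$ as the common constants of all these derivations, $\eS \cdot L^{\natural}$ is stable as well.

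For the second stage, start from Lemma~\ref{1202a}:
\[
\eS \simeq A \otimes_{K^{\sharp}} L^{\sharp}, \qquad A := \iota(K)[B,\, (\det B)^{-1}].K^{\sharp}.
\]
I would then apply Kolchin first to the inclusion $L^{\sharp} \subset L^{\natural}[[W]]$, viewed as a partial-differential subfield whose ambient ring has constants $L^{\natural}$: provided $C_{L^{\sharp}} = K^{\natural}$, this gives $L^{\sharp} \otimes_{K^{\natural}} L^{\natural} \simeq L^{\sharp} \cdot L^{\natural}$ inside $L^{\natural}[[W]]$. Second, I would exploit that $A \subset K^{\sharp}[[X]]$ by Lemma~\ref{1123a}, so $A$ lives in the ``$X$-direction'' of $L^{\natural}[[W, X]] = L^{\natural}[[W]][[X]]$ while $L^{\sharp} \cdot L^{\natural}$ lives in the ``$W$-direction'' as coefficients. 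A short coefficient-by-coefficient argument (choose a $K^{\natural}$-basis of $L^{\sharp} \cdot L^{\natural}$ and compare powers of $X$) shows that $A \otimes_{K^{\natural}} (L^{\sharp} \cdot L^{\natural}) \to A \cdot L^{\sharp} \cdot L^{\natural}$ is injective. Splicing these with Lemma~\ref{1202a} (using $K^{\sharp} = K^{\natural}$ as abstract rings) then yields the desired triple tensor product, with the partial-differential structure tracked through each step since every map involved respects $d/dX$, sends $D_i$ to $\partial/\partial W_i$, and fixes $L^{\natural}$.

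The principal obstacle is the identification $C_{L^{\sharp}} = K^{\natural}$. Since $\{D_1, \ldots, D_d\}$ is a basis of the $L^{\natural}$-space of $K^{\natural}$-derivations of $L^{\natural}$, the constants $C_{L^{\sharp}}$ coincide with the algebraic closure of $K^{\natural}$ inside $L^{\natural}$; this equals $K^{\natural}$ precisely when $K^{\natural}$ is algebraically closed in $L^{\natural}$. That hypothesis is ensured by the reduction in the paper preceding Lemma~\ref{1123a}, where the base field $K$ is replaced by its algebraic closure in $Q(S)$. Once this identification is secured, the two Kolchin applications and Lemma~\ref{1202a} combine directly into the stated partial-differential isomorphism.
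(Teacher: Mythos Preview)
Your proof is correct and follows the same overall strategy as the paper: combine Kolchin's linear disjointness (Lemma~\ref{1126a}) with Lemma~\ref{1202a} to factor $\eS\cdot L^{\natural}$ as the stated triple tensor product, checking along the way that every map is compatible with both $d/dX$ and the $\partial/\partial W_i$. The difference is only in the order of the splittings. The paper first asserts the $\{\partial/\partial W_i\}$-isomorphism $\eS\cdot L^{\natural}\simeq \eS\otimes_{K^{\natural}}L^{\natural}$ ``in the same way as the proof of Lemma~\ref{1202a}'' and then invokes Lemma~\ref{1202a} for the inner factor; you instead start from Lemma~\ref{1202a}, then peel off $L^{\natural}$ from $L^{\sharp}$ via Kolchin in $L^{\natural}[[W]]$, and finish with a direct coefficient comparison separating $A\subset K^{\natural}[[X]]$ from $L^{\sharp}\cdot L^{\natural}\subset L^{\natural}[[W]]$. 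Your ordering has the advantage of making the hypothesis $C_{L^{\sharp}}=K^{\natural}$ explicit: the paper's one-line justification of~\eqref{1202b} tacitly needs the same thing, since the $\{\partial/\partial W_i\}$-constants of $Q(\eS)$ already contain all of $\iota(K)[B,(\det B)^{-1}]\subset K^{\natural}[[X]]$, so a direct Kolchin application with $M=Q(\eS)$ would only yield disjointness over that larger ring, not over $K^{\natural}$. In effect your argument is a more careful unpacking of what the paper's ``same way'' must mean.
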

\begin{proof}
Since the universal Taylor morphism $\iota_{L^{\sharp}}$ is differential morphism, 
the sub-ring $\eS$ is closed under the differentiations $d/dX$ and $\partial/\partial W_{i}$ by Lemma \ref{1201d}. 
And the constants power series $L^{\natural}$ is clearly closed under the differentiations. 
So the sub-ring $\eS.L^{\natural}$ is a partial differential sub-ring. 
In the same way as the proof of Lemma \ref{1202a}, we have $\{ \partial/\partial W_{i}\}$-differential isomorphism
\begin{equation}\label{1202b}
\eS.L^{\natural} \simeq \eS \otimes_{K^{\natural}} L^{\natural}. 
\end{equation}
Isomorphism \eqref{1202b} is also $\{ d/dX,\,\partial/\partial W_{i}\}$-differential isomorphism because 
the sub-ring $\eS$ and the sub-field $L^{\natural}$ are closed under differentiations 
$d/dX$ and $\partial/\partial W_{i}$. 
By Lemma \ref{1202a}, we have $\{ d/dX \}$-differential isomorphism
\begin{equation}\label{1202c}
\eS \simeq \iota(K)[B,\, (\det B)^{-1}].K^{\sharp} \otimes_{K^{\sharp}} L^{\sharp}. 
\end{equation}
Since the sub-ring $(\iota(K)[B,\, (\det B)^{-1}].K^{\sharp}$ and $L^{\sharp}$ also closed under differentiations $d/dX$ and $\partial/\partial W_{i}$, 
isomorphism \eqref{1202c} is $\{ d/dX,\,\partial/\partial W_{i}\}$-differential isomorphism. 
So we get $\{ d/dX,\,\partial/\partial W_{i}\}$-differential isomorphism
\begin{equation}\label{1202d}
\eS\otimes_{K^{\sharp}} L^{\natural} \simeq (\iota(K)[B,\, (\det B)^{-1}].K^{\sharp} \otimes_{K^{\sharp}} L^{\sharp}) \otimes_{K^{\natural}} L^{\natural}. 
\end{equation}
Then the Proposition follows from \eqref{1202b} and \eqref{1202d}. 
\end{proof}
\begin{cor}
\[
\K.L^{\natural} \simeq \iota(K).K^{\sharp} \otimes_{K^{\sharp}} L^{\sharp} \otimes_{K^{\natural}} L^{\natural}. 
\]
\end{cor}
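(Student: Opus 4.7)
The plan is to run the proof of Proposition \ref{1203d} verbatim, with $\K = \iota(K)\cdot L^{\sharp}$ in place of $\eS$ and $\iota(K)\cdot K^{\sharp}$ in place of $\iota(K)[B,(\det B)^{-1}]\cdot K^{\sharp}$. In other words, the corollary is the ``trivial'' case of the proposition corresponding to $Z=B=I$, and exactly the same lemmas apply; one just needs to check that removing the $B$-entries does not disturb any constants computation or disjointness argument.

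First I would establish the analogue of Lemma \ref{1202a}, namely the $d/dX$-differential isomorphism
\[
\K \simeq \iota(K)\cdot K^{\sharp} \otimes_{K^{\sharp}} L^{\sharp}.
\]
Because the derivation $\partial_{L}$ extends $\partial_{K}$, the restriction $\iota|_{K}$ lands in $K^{\natural}[[X]]$, and so $\iota(K)\cdot K^{\sharp}$ sits inside the differential sub-ring $(K^{\sharp}[[X]],\, d/dX)$. Lemma \ref{1201a} then yields $C_{Q(\iota(K)\cdot K^{\sharp})} = K^{\sharp}$, and combining this with $C_{Q(\eS)} = L^{\sharp}$ (Corollary \ref{1201c}) and Kolchin's theorem (Lemma \ref{1126a}) applied inside the ambient differential field $Q(\eS) \supset Q(\K)$ gives the required linear disjointness of $\iota(K)\cdot K^{\sharp}$ and $L^{\sharp}$ over $K^{\sharp}$.

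Second, I would observe that $\K\cdot L^{\natural}$ is a partial differential sub-ring of $L^{\natural}[[W,X]]$: closure under $d/dX$ and each $D_{i}$ is the argument of Lemma \ref{1201d} with $B$ deleted (each $D_{i}$ acts trivially on $\iota(K)\cdot K^{\sharp}$ because that ring lies in $K^{\sharp}[[X]]$), and closure under each $\partial/\partial W_{i}$ follows because $L^{\natural}$ is annihilated by these derivations while $\iota_{L^{\sharp}}$ is itself a partial differential morphism. One further application of the constants-plus-Kolchin argument, exactly as in Proposition \ref{1203d}, produces a partial differential isomorphism $\K\cdot L^{\natural} \simeq \K\otimes_{K^{\natural}} L^{\natural}$, and substituting the isomorphism from the first step yields the stated formula.

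The only point requiring attention is that each invocation of Kolchin must proceed in the partial-differential setting noted in the Remark after Lemma \ref{1201a}; but since $\iota(K)\cdot K^{\sharp}$, $L^{\sharp}$ and $L^{\natural}$ are each closed under every derivation in sight, this refinement applies routinely and presents no real obstacle. Essentially no new ideas are needed beyond a careful transcription of the existing proof.
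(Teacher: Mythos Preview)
Your proposal is correct and is exactly the paper's approach: the paper's own proof reads in full ``The proof in Proposition \ref{1203d} works also in these cases,'' and you have simply spelled out that transcription with the obvious substitutions $\eS\rightsquigarrow\K$ and $\iota(K)[B,(\det B)^{-1}]\cdot K^{\sharp}\rightsquigarrow\iota(K)\cdot K^{\sharp}$. Nothing further is needed.
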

\begin{cor}\label{1217a}
For an $L^{\natural}$-algebra $A$, 
\[
\eS .A \simeq \iota(K).K^{\sharp} \otimes_{K^{\sharp}} L^{\sharp} \otimes_{K^{\natural}} A
\]
\end{cor}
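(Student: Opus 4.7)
The approach is to obtain the statement by base-changing Proposition \ref{1203d} (or equivalently the preceding corollary for $\K.L^{\natural}$) along the structure map $L^{\natural} \to A$. All of the hard structural work has already been done in Proposition \ref{1203d}; what remains is the bookkeeping needed to replace the factor $L^{\natural}$ by an arbitrary $L^{\natural}$-algebra.

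First I would identify $\eS.A$, the partial differential sub-ring of $A[[W,X]]$ generated by the image of $\eS$ under the composition $\eS \hookrightarrow L^{\sharp}[[X]] \xrightarrow{\iota_{L^{\sharp}}} L^{\natural}[[W,X]] \to A[[W,X]]$ together with $A$, with the tensor product $(\eS.L^{\natural}) \otimes_{L^{\natural}} A$. The natural map $s\otimes a \mapsto s\cdot a$ is a surjective partial differential morphism (the derivations $d/dX$ and $\partial/\partial W_i$ act trivially on $A$). Injectivity is obtained by repeating the linear-disjointness argument from the proof of Proposition \ref{1203d}: inside $L^{\natural}[[W,X]]$ the field $L^{\natural}$ consists precisely of the elements killed by every $\partial/\partial W_i$ and $d/dX$ (Corollary \ref{1201c} and Lemma \ref{1201a}), and by Proposition \ref{1203d} itself $\eS.L^{\natural}$ is already displayed as a tensor product over $L^{\natural}$, hence is $L^{\natural}$-flat, so the base change to $A$ is injective.

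Second, chaining this identification with Proposition \ref{1203d} and using associativity of the tensor product gives
\begin{align*}
\eS.A &\simeq (\eS.L^{\natural}) \otimes_{L^{\natural}} A \\
&\simeq \bigl(\iota(K)[B,(\det B)^{-1}].K^{\sharp} \otimes_{K^{\sharp}} L^{\sharp} \otimes_{K^{\natural}} L^{\natural}\bigr) \otimes_{L^{\natural}} A \\
&\simeq \iota(K)[B,(\det B)^{-1}].K^{\sharp} \otimes_{K^{\sharp}} L^{\sharp} \otimes_{K^{\natural}} A,
\end{align*}
which is the claimed formula, reading the abbreviated $\iota(K).K^{\sharp}$ on the right-hand side as shorthand for the full $\iota(K)[B,(\det B)^{-1}].K^{\sharp}$ that parallels the statement of the preceding corollary.

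The main obstacle is the first step, namely controlling $\eS.A$ when $A$ is an arbitrary — possibly non-reduced — $L^{\natural}$-algebra, because the Kolchin-type disjointness arguments of Section 4 were stated for differential fields. The cleanest way around this is to exploit the fact that, once Proposition \ref{1203d} is in hand, $\eS.L^{\natural}$ is already presented as a free $L^{\natural}$-module via its Taylor coefficients, so the whole corollary reduces to applying the flat base change $-\otimes_{L^{\natural}} A$ to a known isomorphism, with no further appeal to properties of $A$.
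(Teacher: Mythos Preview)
Your proposal is correct and amounts to the same argument the paper has in mind: the paper's entire proof is the single sentence ``The proof in Proposition \ref{1203d} works also in these cases,'' i.e.\ re-run the linear-disjointness/tensor decomposition with $A$ in place of $L^{\natural}$, whereas you instead apply $-\otimes_{L^{\natural}} A$ to the isomorphism already obtained in Proposition \ref{1203d} and collapse the tensors. These are two packagings of the same idea, and your version has the virtue of being explicit about why the step survives for a possibly non-reduced $A$ (freeness over $L^{\natural}$ in place of a direct appeal to Kolchin disjointness), a point the paper leaves implicit. Your reading of $\iota(K).K^{\sharp}$ on the right-hand side as shorthand for $\iota(K)[B,(\det B)^{-1}].K^{\sharp}$ is also correct; the usage of Corollary \ref{1217a} inside the proof of Theorem \ref{1210b} confirms this is a typographical slip in the statement.
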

\begin{proof}
The proof in Proposition \ref{1203d} works also in these cases. 
\end{proof}
%%%%%%%%%%%%%%%%%%%%%%%%%%%%%%%%%%%%%%%%%%%%%%
We take a subset $(\iota(K).K^{\sharp})^{\ast}$ of the ring $\iota(K)[B,\, (\det B)^{-1}].K^{\sharp}$. 
The set $(\iota(K).K^{\sharp})^{\ast}$ is a multiplicative set. 
%We consider the localization ring of $\iota(K)[B,\, (\det B)^{-1}].K^{\sharp}$ by $\iota(K).K^{\sharp}$ which is equal to $Q(\iota(K).K^{\sharp})[B,\, (\det B)^{-1}]$.
The localization of $\iota(K)[B,\, (\det B)^{-1}].K^{\sharp}$ by $(\iota(K).K^{\sharp})^{\ast}$ is equal to $Q(\iota(K).K^{\sharp})[B,\, (\det B)^{-1}]$. 
\begin{lemma}\label{1202h}
The field of constants $C_{Q(\iota(K).K^{\sharp})}$ of $Q(\iota(K).K^{\sharp})$ and the field of constants $C_{Q(\iota(K).K^{\sharp})[B,\, (\det B)^{-1}]}$ of $Q(\iota(K).K^{\sharp})[B,\, (\det B)^{-1}]$ are equal to $K^{\sharp}$. 
\end{lemma}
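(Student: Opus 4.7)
The plan is to reduce both assertions to Lemma \ref{1201a} applied inside the formal power series ring $K^{\sharp}[[X]]$, using Lemma \ref{1123a} to guarantee that the relevant objects actually land in this sub-ring rather than in the larger $L^{\sharp}[[X]]$.

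First I would record the inclusion $\iota(K).K^{\sharp} \subset K^{\sharp}[[X]]$. For $a \in K$, the universal Taylor expansion $\iota(a) = \sum_{n \ge 0} (1/n!)\,\partial^{n}(a)\, X^{n}$ has every coefficient in $K = K^{\natural}$, and $K^{\sharp}$ sits inside $K^{\sharp}[[X]]$ as the sub-field of constant power series. Hence $\iota(K).K^{\sharp}$ is a $d/dX$-differential sub-ring of $K^{\sharp}[[X]]$ containing the field $K^{\sharp}$ (it is $d/dX$-stable because $\iota$ is a differential morphism). Applying Lemma \ref{1201a} with $M := K^{\sharp}$ and $R := \iota(K).K^{\sharp}$ gives $C_{Q(\iota(K).K^{\sharp})} = K^{\sharp}$, which is the first half of the statement.

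For the second half, Lemma \ref{1123a} asserts $B \in \mathrm{GL}_{n}(K^{\sharp}[[X]])$, so the entries of $B$ and of $(\det B)^{-1}$ all lie in $K^{\sharp}[[X]]$. Combined with $Q(\iota(K).K^{\sharp}) \subset Q(K^{\sharp}[[X]]) = K^{\sharp}[[X]][X^{-1}]$ from the first part, this yields
\[
K^{\sharp} \subset Q(\iota(K).K^{\sharp})[B,\, (\det B)^{-1}] \subset K^{\sharp}[[X]][X^{-1}].
\]
A second application of Lemma \ref{1201a} (to $R = K^{\sharp}[[X]]$ itself) shows that the $d/dX$-constants of $K^{\sharp}[[X]][X^{-1}]$ are $K^{\sharp}$, so the $d/dX$-constants of the middle ring are sandwiched between $K^{\sharp}$ and $K^{\sharp}$, giving equality. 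For the partial derivations, since the $D_{i}$ are $K^{\natural}$-derivations of $L^{\natural}$ they vanish on $K^{\natural}$ and hence act trivially on $K^{\sharp}[[X]]$ by coefficient-wise action; in particular every element of both rings is automatically a $D_{i}$-constant, so no further restriction is imposed.

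The main obstacle, such as it is, is purely bookkeeping: one must recognize that localizing $\iota(K).K^{\sharp}$ and then adjoining $B$ and $(\det B)^{-1}$ does not push us out of the constant-coefficient ring $K^{\sharp}[[X]]$ (or rather its field of fractions $K^{\sharp}[[X]][X^{-1}]$). This is precisely the content of Lemma \ref{1123a}. Once the inclusion is in hand, both computations of constants follow at once from Lemma \ref{1201a} together with the remark that it applies equally well in the partial differential setting.
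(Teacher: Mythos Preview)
Your proof is correct and follows essentially the same approach as the paper: apply Lemma~\ref{1201a} to the inclusion $\iota(K).K^{\sharp}\subset K^{\sharp}[[X]]$ for the first assertion, then use Lemma~\ref{1123a} to obtain the chain $K^{\sharp}\subset Q(\iota(K).K^{\sharp})[B,(\det B)^{-1}]\subset K^{\sharp}[[X]][X^{-1}]$ and conclude via the same sandwich argument. Your write-up is in fact more detailed than the paper's, which leaves the role of Lemma~\ref{1123a} and the verification that $\iota(K)\subset K^{\sharp}[[X]]$ implicit.
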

\begin{proof}
We can apply Lemma \ref{1201a} to $\iota(K).K^{\sharp} \subset K^{\sharp}[[X]]$ then $C_{Q(\iota(K).K^{\sharp})}=K^{\sharp}$. 
\[
K^{\sharp} \subset Q(\iota(K).K^{\sharp})[B,\, (\det B)^{-1}] \subset K^{\sharp}[[X]][X^{-1}]. 
\]
We have $C_{Q(\iota(K).K^{\sharp})[B,\, (\det B)^{-1}]}=K^{\sharp}$. 
\end{proof}
\begin{lemma}\label{1203b}
$Q(\iota(K).K^{\sharp})[B,\, (\det B)^{-1}]$ is a Picard-Vessiot ring over the field $Q(\iota(K).K^{\sharp})$ for the equation $Y'=\iota(A)Y$ if the field $K$ is algebraically closed. 
\end{lemma}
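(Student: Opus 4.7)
The plan is to verify directly the three defining conditions of a Picard-Vessiot ring for $\tilde{R} := Q(\iota(K).K^{\sharp})[B,\,(\det B)^{-1}]$ over $\tilde{K} := Q(\iota(K).K^{\sharp})$ with derivation $d/dX$. Conditions (2) and (3) come for free: Lemma \ref{1123a}, together with the identity \eqref{1124b} from its proof, exhibits $B$ as a fundamental matrix in $\mathrm{GL}_n(\tilde{R})$ for $Y' = \iota(A)Y$, and the required ring-theoretic presentation of $\tilde{R}$ is built into its definition.

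The substantive content lies in condition (1), the simplicity of $\tilde{R}$ as a differential ring. My approach is to avoid attacking simplicity head-on and instead to first upgrade the problem to the field level. I would check the hypotheses of Proposition \ref{1202i} for $Q(\tilde{R})/\tilde{K}$: $B$ is a fundamental matrix, $Q(\tilde{R})$ is generated over $\tilde{K}$ by its entries, and by Lemma \ref{1202h} the field of constants of $Q(\tilde{R})$ is $K^{\sharp}$, matching $C_{\tilde{K}}$. This is where the hypothesis that $K$ is algebraically closed enters---Picard-Vessiot theory (specifically Propositions \ref{1202i} and \ref{1202e}) requires the base field of constants to be algebraically closed, and since the underlying abstract field of $K^{\sharp}$ coincides with that of $K$, algebraic closedness of $K$ transfers to $K^{\sharp}$. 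Proposition \ref{1202i} then declares $Q(\tilde{R})$ a Picard-Vessiot field for $Y'=\iota(A)Y$.

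To close, I would invoke the standard observation that any Picard-Vessiot ring sitting inside a Picard-Vessiot field is of the form $\tilde{K}[F,\,(\det F)^{-1}]$ for any fundamental matrix $F$. Proposition \ref{1202e} supplies a Picard-Vessiot ring $R_0 \subset Q(\tilde{R})$ presented as $\tilde{K}[F_0,\,(\det F_0)^{-1}]$ for some fundamental $F_0$; since the field of constants of $Q(\tilde{R})$ is $K^{\sharp}$, the two fundamental matrices $F_0$ and $B$ differ by right multiplication by an element of $\mathrm{GL}_n(K^{\sharp})$, so $\tilde{R} = \tilde{K}[B,\,(\det B)^{-1}] = \tilde{K}[F_0,\,(\det F_0)^{-1}] = R_0$, which is simple by construction. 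The step I expect to be most delicate is this very last transfer from Picard-Vessiot field to Picard-Vessiot ring: the field-level result only controls $Q(\tilde{R})$, and it is the $\mathrm{GL}_n(K^{\sharp})$-invariance of the presentation $\tilde{K}[F,\,(\det F)^{-1}]$ that forces $\tilde{R}$ itself to coincide with the simple Picard-Vessiot ring.
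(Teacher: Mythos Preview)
Your approach is essentially the same as the paper's: both pass to the field of fractions $Q(\tilde{R})$, verify the three hypotheses of Proposition~\ref{1202i} (fundamental matrix $B$, generation by its entries, constants equal to $K^{\sharp}$), conclude that $Q(\tilde{R})$ is a Picard--Vessiot field, and then descend to the ring. The paper simply asserts the last step (``and then $T$ is a Picard--Vessiot ring''), whereas you supply the standard argument via $\mathrm{GL}_n(K^{\sharp})$-equivalence of fundamental matrices; this is a welcome addition, not a divergence. One small citation slip: Lemma~\ref{1202h} literally computes the constants of $\tilde{K}$ and of $\tilde{R}$, not of $Q(\tilde{R})$; the paper handles this by noting $\tilde{R}\subset K^{\sharp}[[X]][X^{-1}]$, so $Q(\tilde{R})$ sits there too and inherits the constant field $K^{\sharp}$---you should insert the same one-line observation.
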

%Since $\iota_{L} \colon L \to L^{\sharp}[[X]]$ is differential isomorphism, we identify $\iota(K)$ with $K$. 
%\begin{lemma}
%Let $R$ be a Picard-Vessiot ring over the field $Q(K.K^{\sharp})$ for the equation $Y'=AY$. 
%Then the ring $(Q(\iota(K).K^{\sharp})[B,\, (\det B)^{-1}], \, \frac{d}{dX})$ and $(R,\, \partial)$ %are isomorphic if $K$ is an algebraically closed field. Moreover $(Q(\iota(K).K^{\sharp})[B,\, (\det B)^{-1}], \, \frac{d}{dX})$ is a Picard-Vessiot ring over the field $Q(\iota(K).K^{\sharp})$
%\end{lemma}
\begin{proof}
We denote $Q(\iota(K).K^{\sharp})[B,\, (\det B)^{-1}]$ by $T$. By the proof of Lemma \ref{1202h}, 
$T$ in $K^{\sharp}[[X]][X^{-1}]$, them the field of fractions $Q(T)$ of the ring $T$ has also the field of constants $K^{\sharp}$. 
And $Q(T)$ has the fundamental matrix $B$ of the equation $Y'=\iota(A)Y$ and $Q(T)$ is generated over $Q(\iota(K).K^{\sharp})$ by the entries of $B$.
Then by Proposition \ref{1202i} of Section \ref{1202g} the field $Q(T)$ is a Picard-Vessiot field and then $T$ is a Picard-Vessiot ring. 
\end{proof}
%%%%%%%%%%%%%%%%%%%%%%%%%%%%%%%%%%%%%%%%%%%%%%
\begin{lemma}\label{1203c}
We assume that the field $K^{\sharp}$ is algebraically closed. 
Let $R$ be a Picard-Vessiot ring for the equation $Y'=AY$ over the field $Q(K\otimes_{C}K^{\sharp})$. Then 
$R$ and $Q(\iota(K).K^{\sharp})[B,\, (\det B)^{-1}]$ are differentially isomorphic. 
\end{lemma}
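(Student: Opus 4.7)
The plan is to reduce the statement to the uniqueness of Picard--Vessiot rings (Proposition \ref{1202e}(2)), after identifying the two candidate base fields $Q(K\otimes_{C}K^{\sharp})$ and $Q(\iota(K).K^{\sharp})$.

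First, I would show that $\iota(K).K^{\sharp}\simeq K\otimes_{C}K^{\sharp}$ as differential rings. The universal Taylor morphism $\iota$ is injective, so $\iota(K)\simeq K$ as a differential field with constant field $C$. Inside the ambient differential ring $K^{\sharp}[[X]]$, the sub-field $\iota(K)$ has constants $C$, while $K^{\sharp}$ sits inside the constants of $K^{\sharp}[[X]]$. Kolchin's linear disjointness lemma (Lemma \ref{1126a}) applied to the differential sub-field $\iota(K)$ of $K^{\sharp}[[X]]$ shows that $\iota(K)$ and $K^{\sharp}$ are linearly disjoint over $C$. Hence the natural multiplication map $K\otimes_{C}K^{\sharp}\to \iota(K).K^{\sharp}$ is an isomorphism of (partial) differential rings, and passing to fraction fields gives a differential isomorphism $Q(K\otimes_{C}K^{\sharp})\simeq Q(\iota(K).K^{\sharp})$. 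Moreover, under this identification, the matrix $\iota(A)$ is exactly the image of $A\in M_{n}(K)$ in $M_{n}(Q(K\otimes_{C}K^{\sharp}))$, so the differential equations $Y'=\iota(A)Y$ and $Y'=AY$ coincide.

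Next I would assemble the uniqueness argument. By Lemma \ref{1203b}, the ring $Q(\iota(K).K^{\sharp})[B,\,(\det B)^{-1}]$ is a Picard--Vessiot ring for $Y'=\iota(A)Y$ over $Q(\iota(K).K^{\sharp})$. By hypothesis, $R$ is a Picard--Vessiot ring for $Y'=AY$ over $Q(K\otimes_{C}K^{\sharp})$. Lemma \ref{1202h} (or rather its proof) identifies the field of constants of $Q(\iota(K).K^{\sharp})$ with $K^{\sharp}$, and under the isomorphism above the field of constants of $Q(K\otimes_{C}K^{\sharp})$ is likewise $K^{\sharp}$. Since $K^{\sharp}$ is algebraically closed by hypothesis, Proposition \ref{1202e}(2) applies: any two Picard--Vessiot rings over the same differential base field (with algebraically closed constants) for the same equation are differentially isomorphic. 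This yields the desired differential isomorphism $R\simeq Q(\iota(K).K^{\sharp})[B,\,(\det B)^{-1}]$.

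The main conceptual step -- and the main obstacle -- is the first one: verifying cleanly that $\iota(K).K^{\sharp}$ really is a tensor product over $C$ and carries the expected differential structure, and that after taking fractions the constants agree with $K^{\sharp}$. Everything after that is a formal appeal to the uniqueness theorem for Picard--Vessiot rings. The reason this works is that the hypothesis ``$K^{\sharp}$ algebraically closed'' is precisely what is needed both for applying Proposition \ref{1202e}(2) and for the Picard--Vessiot structure obtained in Lemma \ref{1203b} to be well-behaved.
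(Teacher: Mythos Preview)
Your proposal is correct and follows essentially the same route as the paper: identify $Q(\iota(K).K^{\sharp})$ with $Q(K\otimes_{C}K^{\sharp})$, invoke Lemma~\ref{1203b} to see that $Q(\iota(K).K^{\sharp})[B,(\det B)^{-1}]$ is a Picard--Vessiot ring for $Y'=AY$ over this base, and then apply the uniqueness statement Proposition~\ref{1202e}(2). In fact you supply more detail than the paper does, since the paper simply asserts the base-field isomorphism while you justify it via Lemma~\ref{1126a}; note also that ``$K$ algebraically closed'' and ``$K^{\sharp}$ algebraically closed'' are the same hypothesis on the underlying field, so your appeal to Lemma~\ref{1203b} is legitimate.
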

\begin{proof}
By the differential isomorphism
\[
Q(\iota(K).K^{\sharp}) \simeq Q(K\otimes_{C}K^{\sharp}), 
\]
the ring $Q(\iota(K).K^{\sharp})[B,\, (\det B)^{-1}]$ is a differential over ring of $Q(K\otimes_{C}K^{\sharp})$
%In $L^{\natural}[[W,\, X]]$, $\iota(K)$ and $K$ are differentially isomorphic. 
Then by Lemma \ref{1203b} we consider $Q(\iota(K).K^{\sharp})[B,\, (\det B)^{-1}]$ is a Picard-Vessiot ring over the field $Q(K\otimes_{C}K^{\sharp})$ for the equation $Y'=AY$. So this lemma follows from Proposition \ref{1202e} (2) of Section \ref{1202g}. 
\end{proof}
%%%%%%%%%%%%%%%%%%%%%%%%%%%%%%%%%%%%%%%%%%%%%%
The following theorem is the main result. 
\begin{theorem}\label{1210b}
We assume that the field $K$ is algebraically closed. 
Let $S$ be a differential domain generated over $K$ by a system of solutions of a linear differential equation $Y' = AY$ and $R= K[Z^{PV}, \, (\det Z^{PV})^{-1}]$ be a Picard-Vessiot ring for the equation $Y'=AY$. %=K[Z^{PV}, \, (\det Z^{PV})^{-1}]/K
The field $L=Q(S)$ is the field of fractions of $S$ and the field $L^{PV} = Q(R)$ is the field of fractions of $R$. 
Then we have an isomorphism
\[
\mathrm{Lie}\;(\infgal (L/K)) \simeq \mathrm{Lie}\;(\mathrm{Gal}(L^{PV}/K)) \otimes_{C} L^{\natural}. 
\]
\end{theorem}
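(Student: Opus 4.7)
The plan is to compute both sides of the claimed isomorphism by evaluating the relevant functors at $A = L^{\natural}[\epsilon]/(\epsilon^{2})$, so that the desired identity emerges from a direct comparison of tangent spaces. Throughout, I write $T := \iota(K)[B,\,(\det B)^{-1}]\cdot K^{\sharp}$ and $T_{0} := \iota(K)\cdot K^{\sharp}$ for the ``Picard-Vessiot part'' and its base in $K^{\sharp}[[X]]$.

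First I would unpack the description of $\infgal(L/K)(A)$ using the structural isomorphisms already established. By Corollary \ref{1217a} and Lemma \ref{1202a},
\[
\eS\hat{\otimes}_{L^{\sharp}}A[[W]] \;\simeq\; T\otimes_{K^{\sharp}}L^{\sharp}\hat{\otimes}_{L^{\sharp}}A[[W]] \;\simeq\; T\otimes_{K^{\sharp}}A[[W]],
\]
and analogously $\K\hat{\otimes}_{L^{\sharp}}A[[W]]\simeq T_{0}\otimes_{K^{\sharp}}A[[W]]$. An element $f\in\infgal(L/K)(A)$ fixes $\K\hat{\otimes}_{L^{\sharp}}A[[W]]$, hence fixes both $A[[W]]$ and $L^{\sharp}$. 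Since $T\subset K^{\sharp}[[X]]$ by Lemma \ref{1123a}, the derivations $D_{i}$ act trivially on $T$, so commutation with $D_{i}$ is automatic for the action on $T$; commutation with $d/dX$ is the only genuine differential condition. Consequently $f$ is determined by (and equivalent to) a $d/dX$-differential automorphism of $T\otimes_{K^{\sharp}}A$ over $T_{0}\otimes_{K^{\sharp}}A$ that is congruent to the identity modulo nilpotents, extended uniquely to the $W$-adic completion by $\K\hat{\otimes}_{L^{\sharp}}A[[W]]$-linearity.

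Second, I would identify this automorphism group with the group of $A$-points of a Picard-Vessiot Galois group. By Lemma \ref{1203b}, $Q(T_{0})[B,(\det B)^{-1}]$ is a Picard-Vessiot ring with base $Q(T_{0})$ and constants $K^{\sharp}$; by Lemma \ref{1203c} (using that $K$ is algebraically closed, so $K^{\sharp}$ contains an algebraically closed field of constants and the PV ring over $Q(K\otimes_{C}K^{\sharp})$ is essentially unique), this ring is differentially isomorphic to the base change $R\otimes_{K}Q(K\otimes_{C}K^{\sharp})$. The standard Picard-Vessiot torsor formalism then gives, for any $L^{\natural}$-algebra $A$,
\[
\mathrm{Aut}^{\partial}\!\left(T\otimes_{K^{\sharp}}A\,/\,T_{0}\otimes_{K^{\sharp}}A\right)_{\mathrm{nilp}} \;\simeq\; \bigl(\mathrm{Gal}(L^{PV}/K)\times_{C}L^{\natural}\bigr)(A),
\]
since the Galois group of a Picard-Vessiot extension is insensitive to base change of constants along the algebraically closed $C$.

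Finally, I would pass to Lie algebras by setting $A = L^{\natural}[\epsilon]/(\epsilon^{2})$. The tangent space at the identity of an affine group scheme commutes with flat base change, so
\[
\mathrm{Lie}\bigl(\mathrm{Gal}(L^{PV}/K)\times_{C}L^{\natural}\bigr) \;=\; \mathrm{Lie}\bigl(\mathrm{Gal}(L^{PV}/K)\bigr)\otimes_{C}L^{\natural},
\]
and combining with the identification from the previous step yields the desired isomorphism.

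The main obstacle is the second step: one must justify carefully that the $W$-adically continuous, $D_{i}$- and $d/dX$-compatible automorphisms of $\eS\hat{\otimes}_{L^{\sharp}}A[[W]]$ collapse cleanly to $d/dX$-automorphisms of $T\otimes_{K^{\sharp}}A$ with no further data. This requires the disjointness results (Lemmas \ref{1126a} and \ref{1202a}) to control the tensor decomposition and to verify that the $L^{\sharp}$- and $A[[W]]$-factors genuinely pass through the automorphism invariantly, so that the Picard-Vessiot group appears precisely base-changed from $C$ to $L^{\natural}$ without spurious corrections.
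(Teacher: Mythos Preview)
Your proposal is correct and follows essentially the same route as the paper: both reduce the computation of $\infgal(L/K)(A)$ for $A=L^{\natural}[\varepsilon]$ to automorphisms of $T\otimes_{K^{\sharp}}A$ over $T_{0}\otimes_{K^{\sharp}}A$ via the tensor decomposition of Lemma \ref{1202a} and Corollary \ref{1217a}, and then identify this with the Picard--Vessiot side through Lemmas \ref{1203b} and \ref{1203c}. The only stylistic difference is that the paper writes out explicit maps in both directions (transporting an infinitesimal automorphism $f$ of $K[Z^{PV},(\det Z^{PV})^{-1}]\otimes_{C}A$ through the chain $\tilde f\to\bar f\to\bar f'$, and conversely restricting $g$), whereas you package the forward direction as an appeal to the standard torsor formalism and base-change invariance of the Picard--Vessiot group along $C\hookrightarrow K^{\sharp}$; these are the same argument in different dress.
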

\begin{proof}
We have to show
\begin{align*}
\text{Inf-aut}(\eS \hat{\otimes}_{L^{\sharp}}A[[W]]/\K \hat{\otimes}_{L^{\sharp}}A[[W]]) \hspace{15em}\\
\simeq \text{Inf-aut}(K[Z^{PV},\, (\det Z^{PV})]\otimes_{C}A/ K \otimes_{C} A)
\end{align*}
for $A=L^{\natural}[\varepsilon]$ with $\varepsilon^{2}=0$, where Inf-aut$(\mathfrak{A/B})$ denotes the set of $\mathfrak{B}$-automorphisms of $\mathfrak{A}$ congruent to the identity map of $\mathfrak{A}$ modulo nilpotent element. 
The following argument works for every $L^{\natural}$-algebra $A$. 
We have the following isomorphisms
\begin{align}
 K[Z^{PV}, \, (\det Z^{PV})^{-1}] \otimes_{C} A &\simeq (K[Z^{PV}, \, (\det Z^{PV})^{-1}] \otimes_{C} K^{\natural}) \otimes_{K^{\natural}} A \notag\\
&\simeq (K[Z^{PV}, \, (\det Z^{PV})^{-1}] . K^{\natural}) \otimes_{K^{\natural}} A, \label{1217b}
\end{align}
over $K\otimes_{C}A$. 
Given an infinitesimal automorphism $f$ of $L^{PV}\otimes_{C} A$ over $K \otimes_{C} A$, 
by isomorphisms \eqref{1217b} defines an infinitesimal automorphism $\tilde{f}$ of 
\[(K[Z^{PV}, \, (\det Z^{PV})^{-1}] . K^{\natural})\otimes_{K^{\natural}} A\]
over $K\otimes_{C}A$. 
The infinitesimal automorphism $\tilde{f}$ extended to the localization of $(K[Z^{PV}, \, (\det Z^{PV})^{-1}] . K^{\natural}) $ by the multiplicative system $(K\otimes_{C}K^{\sharp})^{\ast}$. 
By Lemma \ref{1203c}, we get an infinitesimal automorphism $\bar{f}$ of 
\[
Q(\iota(K).K^{\sharp})[B,\, (\det B)^{-1}]\otimes_{K^{\natural}}A
\]
over $Q(\iota(K).K^{\sharp})\otimes_{C}A$. 
Since $B$ is a system of solutions of linear differential equation, 
the infinitesimal automorphism $\bar{f}$ induces %$\iota(K)[B,\, (\det B)^{-1}].K^{\sharp}$ is an automorphism of the $\iota(K)[B,\, (\det B)^{-1}].K^{\sharp}$ over $\iota(K).K^{\sharp}$. 
an infinitesimal automorphism $\bar{f}'$ of \[
\iota(K)[B,\, (\det B)^{-1}].K^{\sharp}\otimes_{K^{\natural}}A\]
over $\iota(K).K^{\natural}\otimes_{K^{\sharp}}A$. 
Since the $W_{i}$'s are variable, $\bar{f}'$ defines an infinitesimal automorphism of 
\[\iota(K)[B,\, (\det B)^{-1}].K^{\natural}\otimes_{K^{\sharp}}A[[W]]\]
over $\iota(K).K^{\natural} \otimes_{K^{\sharp}}A[[W]]$. 
Therefore an infinitesimal automorphism of $\eS \otimes_{L^{\sharp}} A[[W]]$ over $\K \otimes_{L^{\sharp}} A[[X]]$ by Lemma \ref{1202a}. 
So consequently an infinitesimal automorphism of $\eS \hat{\otimes}_{L^{\sharp}}A[[W]]$ over $\K \hat{\otimes}_{L^{\sharp}}A[[W]]$. 

To prove the converse, %let $g \in \infgal(L/K)(L^{\natural}[\epsilon])$. Since $\partial B /\partial W_{i} = 0$, 
%\[
%\text{Inf-aut}(\eS\hat{\otimes}_{L^{\natural}} A[[W]]) = \text{Inf-aut}(\eS \otimes_{L^{\natural}}A[[W]]). \] 
%By Proposition \ref{1203b} $g$ induces an automorphism of the 
%$\iota(K)[B,\, (\det B)^{-1}].K^{\sharp}$ over the ring $\iota(K).K^{\sharp}$. And above discussion also work. 
we notice that we have
\begin{align*}
\text{Inf-aut}(\eS \hat{\otimes}_{L^{\sharp}}A[[W]]/\K \hat{\otimes}_{L^{\sharp}}A[[W]])\hspace{15em} \\
\simeq \text{Inf-aut}(\iota(K)[B,\, (\det B)^{-1}]K^{\sharp}\otimes_{K^{\sharp}}A/ \iota(K)\otimes_{K^{\sharp}} A)
\end{align*}
In fact, given an $g \in \text{Inf-aut}(\eS \hat{\otimes}_{L^{\sharp}}A[[W]]/\K \hat{\otimes}_{L^{\sharp}}A[[W]])$ so that
\[
g \colon \eS \hat{\otimes}_{L^{\sharp}}A[[W]] \rightarrow \eS \hat{\otimes}_{L^{\sharp}}A[[W]]. 
\]
By corollary \ref{1217a} the restriction $\tilde{g}:= g|_{\iota(K)[B,\, (\det B)^{-1}].K^{\sharp}.A}$ 
to the subalgebra \[
\iota(K)[B,\, (\det B)^{-1}].K^{\sharp}.A \simeq \iota(K)[B,\, (\det B)^{-1}].K^{\sharp}\otimes_{K^{\sharp}}A
\] 
of $\eS \otimes_{L^{\sharp}}A[[W]]$ maps $
\iota(K)[B,\, (\det B)^{-1}].K^{\sharp}\otimes_{K^{\sharp}}A
$ 
%to \[
%\iota(K)[B,\, (\det B)^{-1}].K^{\sharp}\otimes_{K^{\sharp}}A
%\] 
to \[
\iota(K)[B,\, (\det B)^{-1}].K^{\sharp}.A \simeq \iota(K)[B,\, (\det B)^{-1}].K^{\sharp}\otimes_{K^{\sharp}}A. \]
Therefore we have a commutative diagram
\[
\begin{CD}
\iota(K)[B,\, (\det B)^{-1}].K^{\sharp}\otimes_{K^{\sharp}}A @ > \tilde{g} >> \iota(K)[B,\, (\det B)^{-1}].K^{\sharp}\otimes_{K^{\sharp}}A\\
@VVV @VVV\\
\eS \hat{\otimes}_{L^{\sharp}}A[[W]] @ >> g > \eS \hat{\otimes}_{L^{\sharp}}A[[W]]. 
\end{CD}
\]
Now by isomorphisms \eqref{1217b}, 
\begin{align*}
\text{Inf-aut}(\eS \hat{\otimes}_{L^{\sharp}}A[[W]]/\K \hat{\otimes}_{L^{\sharp}}A[[W]]) \hspace{15em}\\ \simeq \text{Inf-aut}(K[Z^{PV},\, (\det Z^{PV})]\otimes_{C}A/ K \otimes_{C} A)
\end{align*}
\end{proof}
When the base field $K$ is not algebraically closed, the above argument allows us to prove the following result. %we replace $K$ with its algebraic closure $\bar{K}$. 
%Then we have a following theorem. 
\begin{theorem}\label{1210a}
Let $S$ be a differential domain generated over $K$ by a system of solutions of the linear differential equation $Y' = AY$ and $R$ be a Picard-Vessiot ring for the equation $Y'=AY$. %=K[Z^{PV}, \, (\det Z^{PV})^{-1}]/K
The field $L=Q(S)$ is the field of fractions of $S$ and the field $L^{PV} = Q(R)$ is the field of fractions of $R$. 
There exists a finite field extension $\tilde{L}$ of $L^{\natural}$, we have an isomorphism
\[
\mathrm{Lie}\;(\infgal (L/K)) \otimes_{L^{\natural}}\tilde{L}\simeq \mathrm{Lie}\;(\mathrm{Gal}(L^{PV}/K)) \otimes_{C} \tilde{L}. 
\]
\end{theorem}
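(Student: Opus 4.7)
The strategy is to reduce Theorem \ref{1210a} to Theorem \ref{1210b} by passing to a finite base extension. Inspecting the proof of Theorem \ref{1210b}, the only place where algebraic closedness of $K$ enters is through the invocation of Lemma \ref{1203c}, whose proof relies on the uniqueness (up to differential isomorphism) of the Picard-Vessiot ring guaranteed by Proposition \ref{1202e}(2), a statement which requires the constants of the base to be algebraically closed. A finite extension large enough to recover this uniqueness after base change will suffice.

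First, I would apply Proposition \ref{1202f}(4) to the Picard-Vessiot extension $L^{PV}/K$: the subfield $K_{0} := (L^{PV})^{G^{o}}$ is the algebraic closure of $K$ in $L^{PV}$ and is a finite Galois extension of $K$, with identity component $G^{o} = \mathrm{Gal}(L^{PV}/K_{0})$ connected. Since the Lie algebra depends only on the identity component,
\[
\mathrm{Lie}\;\mathrm{Gal}(L^{PV}/K) \simeq \mathrm{Lie}\;\mathrm{Gal}(L^{PV}/K_{0}).
\]
Next, set $\tilde{L} := L^{\natural} \cdot K_{0}$ inside a fixed algebraic closure of $L^{\natural}$; this is a finite extension of $L^{\natural}$. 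Viewing $Y' = AY$ over $K_{0}$, the ring $S \cdot K_{0}$ is generated over $K_{0}$ by a system of solutions and $K_{0}[Z^{PV},(\det Z^{PV})^{-1}]$ is a Picard-Vessiot ring over $K_{0}$ with the same PV field $L^{PV}$.

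Then I would rerun the proof of Theorem \ref{1210b} verbatim with $K_{0}$ in the role of $K$, except at the appeal to Lemma \ref{1203c}: over the non-algebraically-closed base $K_{0}$, the two Picard-Vessiot rings compared there need not be isomorphic on the nose but are torsors under the Galois group scheme which trivialize after a sufficient finite base change. The choice $\tilde{L} = L^{\natural} \cdot K_{0}$ absorbs exactly this algebraic obstruction, so each isomorphism in the chain of Theorem \ref{1210b} survives after applying $- \otimes_{L^{\natural}} \tilde{L}$. Combining with the reduction above yields
\[
\mathrm{Lie}\;\infgal(L/K) \otimes_{L^{\natural}} \tilde{L} \simeq \mathrm{Lie}\;\mathrm{Gal}(L^{PV}/K_{0}) \otimes_{C} \tilde{L} \simeq \mathrm{Lie}\;\mathrm{Gal}(L^{PV}/K) \otimes_{C} \tilde{L}.
\]

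The main obstacle is the precise identification of $\tilde{L}$ and the verification that all auxiliary isomorphisms (notably Lemmas \ref{1202a}, \ref{1202h}, \ref{1203b}) remain valid after the base change. The conceptual point is that the Picard-Vessiot torsor over a non-algebraically-closed base is split by a finite extension, and one needs to check that the algebraic closure of $K$ inside $L^{PV}$ is sufficient; this is what makes $\tilde{L} = L^{\natural} \cdot K_{0}$ the correct choice.
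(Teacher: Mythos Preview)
Your overall strategy---base change to kill the obstruction coming from Lemma \ref{1203c}---matches the paper's, but the paper is much blunter: it simply replaces $K$ by its full algebraic closure $\bar{K}$ (and $L$ by $L\otimes_{K}\bar{K}$) and then reruns the argument of Theorem \ref{1210b} verbatim. The finiteness of $\tilde{L}$ over $L^{\natural}$ then follows implicitly from the fact that all the objects involved are finitely generated, so only a finite subextension of $\bar{K}/K$ is actually used. The paper does not attempt to name $\tilde{L}$.

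Your attempt to pin down $\tilde{L}$ explicitly as $L^{\natural}\cdot K_{0}$ with $K_{0}=(L^{PV})^{G^{o}}$ is where the gap lies. Passing to $K_{0}$ makes the Picard--Vessiot Galois group connected, but that is not the obstruction at Lemma \ref{1203c}. What fails there is the \emph{uniqueness} of the Picard--Vessiot ring over a base whose constants (namely $K_{0}^{\sharp}$) are not algebraically closed: the two rings $Q(\iota(K_{0}).K_{0}^{\sharp})[B,(\det B)^{-1}]$ and the base-changed $R$ are both Picard--Vessiot rings for $Y'=\iota(A)Y$, hence are torsors under the same group scheme, but the finite extension of $K_{0}^{\sharp}$ needed to split this torsor has no a priori reason to lie inside $L^{\natural}\cdot K_{0}$. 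Connectedness of $G^{o}$ does not force the torsor to be trivial. So the sentence ``The choice $\tilde{L}=L^{\natural}\cdot K_{0}$ absorbs exactly this algebraic obstruction'' is asserted without proof and is, as stated, not correct in general.

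The fix is either to follow the paper and pass all the way to $\bar{K}$ (accepting a nonconstructive $\tilde{L}$), or to enlarge your $K_{0}$ further by adjoining a splitting field for the relevant torsor class in $H^{1}(K_{0}^{\sharp},G^{o})$; the latter is finite but is not the $K_{0}$ you wrote down.
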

\begin{proof}
We replace the base field $K$ by its algebraic closure $\bar{K}$ and $L$ by $L\otimes_{K}\bar{K}$. Then we can apply the argument of proof of Theorem \ref{1210b}. 
\end{proof}
%%%%%%%%%%%%%%%%%%%%%%%%%%%%%%%%%%%%%%%%%%%%%%%%%
%%%%%%%%%%%%%%%%%%%%%%%%%%%%%%%%%%%%%%%%%%%%%%%%%

\end{document}